\theoremstyle{plain}
\newtheorem{thm}{Theorem}[section]
\newtheorem{cor}[thm]{Corollary}
\newtheorem{lem}[thm]{Lemma}
\newtheorem{lemma}[thm]{Lemma}
\newtheorem{prop}[thm]{Proposition}
\theoremstyle{definition}
\renewcommand{\P}{\mathbb{P}}
\newcommand{\E}{\mathbb{E}}
\newcommand{\R}{\mathbb{R}}
\newcommand{\N}{\mathbb{N}}
\newcommand{\Px}{P^{\xi}}
\newcommand{\1}{1\hspace{-0.098cm}\mathrm{l}}
\newcommand{\eps}{\varepsilon}
\newcommand{\Z}{\mathbb{Z}}
\newcommand{\Prob}{\mathrm{Prob}}
\newcommand{\Om}{\Omega}
\newcommand{\p}{\P}
\DeclareMathOperator{\supp}{supp}
\newcommand{\be}[1]{\begin{equation}\label{#1}}
\newcommand{\ee}{\end{equation}}
\newcommand{\ra}{\rightarrow}
\newcommand{\bal}{\begin{aligned}}
\newcommand{\eal}{\end{aligned}}
\newcommand{\ssup}[1] {{\scriptscriptstyle{({#1}})}}
\newcommand{\cB}{\mathcal{B}}
\newcommand{\cC}{\mathcal{C}}
\newcommand{\cE}{\mathcal{E}}
\newcommand{\cO}{\mathcal{O}}
\newcommand{\Triple}{A}
\newcommand{\triple}{a}
\newcommand{\ttriple}{\tilde{a}}
\begin{document}

\begin{center}
{\Large \bf Scaling limit and ageing for branching random}\\[1mm]
{\Large \bf  walk in Pareto environment}\\[5mm]
\vspace{0.4cm}
\textsc{
Marcel Ortgiese\footnote{Institut f\"ur Mathematische Statistik,
Westf\"alische Wilhelms-Universit\"at M\"unster,
Einsteinstra\ss{}e 62,
48149 M\"unster,
Germany. } and 
Matthew I. Roberts\footnote{Department of Mathematical Sciences,
University of Bath,
Claverton Down,
Bath BA2 7AY,
United Kingdom.
Supported partially by an EPSRC postdoctoral fellowship (EP/K007440/1).}
}
\\[0.8cm]
{\small \today} 
\end{center}

\vspace{0.3cm}

\begin{abstract}\noindent 
We consider a branching random walk on the lattice, where the branching rates are given by an i.i.d.\ Pareto random potential.
We show that the system of particles, rescaled in an appropriate way, converges in distribution to a scaling limit that is interesting in its own right. We describe the limit object as a growing collection of ``lilypads'' built on a Poisson point process in $\R^d$. As an application of our main theorem, we show that the maximizer of the system displays the ageing property.
  \par\medskip

  \noindent\footnotesize
  \emph{2010 Mathematics Subject Classification}:
  Primary\, 60K37,  \ Secondary\, 60J80.
  
\par\medskip
\noindent\emph{Keywords.} Branching random walk, random environment, parabolic Anderson model, intermittency.
  \end{abstract}

\section{Introduction and main results}

\subsection{Introduction}

Consider a branching random walk in random environment defined on $\Z^d$, starting with a single particle at the origin. Given a collection $\xi = \{ \xi(z) \, : \, z  \in \Z^d\}$ of non-negative random variables, when at site $z$ each particle branches into two particles at rate $\xi(z)$. Besides this, each particle moves independently as a simple random walk in continuous time on $\Z^d$.

This model was introduced in~\cite{GM90}, and most of the analysis thus far has concentrated on the expected number of particles.
Fix a realisation of the environment $\xi$ and write
\[ u(z,t) = E^\xi [ \# \{ \mbox{particles at site } z \mbox{ at time } t  \} ] , \]
where the expectation $E^\xi$ is only over the branching and random walk mechansims and $\xi$ is kept fixed.
Then $u(z,t)$ solves the stochastic partial differential equation, known as the \emph{parabolic Anderson model} (PAM),
\[ \begin{aligned} \partial_t u(z,t) &  = \Delta u(z,t) + \xi(z) u(z,t) , & \quad \mbox{for  }z \in \Z^d, t \geq 0, \\
u(z,0) & = \1_{\{z = 0 \}} &\quad \mbox{for } z \in \Z^d .
\end{aligned}\]
Here, $\Delta$ is the discrete Laplacian defined for any function $f : \Z^d \ra \R$ as
\[ \Delta f(z) = \sum_{y \sim z} (f(y) - f(z)) , \quad z \in \Z^d, \]
where we write $y \sim z$ if $y$ is a neighbour of $z$ in $\Z^d$.

We are particularly interested in the case when the potential is Pareto distributed, i.e.~$\Prob(\xi(z)>x) = x^{-\alpha}$ for all $x\geq1$ and some $\alpha>0$. In this case, the evolution of the PAM is particularly well understood, including asymptotics for the total mass, one point localisation and a scaling limit: see~\cite{HMS08, KLMS09, MOS11,OR14}.

In general much less is known about the branching system itself (without taking expectations). Some of the earlier results include~\cite{ABMY00} and~\cite{GKS13}, who look at the asymptotics of the expectation (with respect to $\xi$) of higher moments of the number of particles. The real starting point for this article is our recent article~\cite{OR14}. We showed that---in the Pareto case---the hitting times of sites, the number of particles, and the support in an appropriately rescaled system are well described by a process defined purely in terms of the environment $\xi$ (that is, given $\xi$, the process is deterministic), which we called the lilypad model.

Our central aim in this article is to show that this lilypad process, and therefore the branching system itself, has a scaling limit. This limit object is entirely new, and interesting in its own right: it is neither deterministic, as for example in~\cite{CP07b} for another variant
of branching random walk in random environment, nor is it
 a stochastic (partial) differential equation. 
Rather the limit is a system of interacting and growing $L^1$ balls in $\R^d$, centred at the points of a Poisson point process. We call this the {\em Poisson lilypad model}, and to avoid confusion we will refer to the lilypad model from~\cite{OR14} as the {\em discrete lilypad model} from now on.

As an application of this characterization, we show that the dominant site in the branching process---that is, the site that has more particles than any other site---remains constant for long periods of time, in fact for periods that increase linearly as time increases. This phenomenon is known as {\em ageing}, and was demonstrated for the PAM in \cite{MOS11}.

\subsection{Definitions and notation}

Before we can state our results precisely, we need to develop some machinery. Throughout this article we write $|\cdot |$ for the $L^1$-norm on $\R^d$. $B(z,R) = \{ x \in \R^d : |x-z| < R \}$ denotes the open ball of radius $R$ about $z$ in $\R^d$, and $\cB(z,R) = \{x\in \R^d : |x-z|\leq R\}$ the closed ball. For any measure $\nu$, we write $\supp\nu$ for the (measure theoretic) support of $\nu$.

We take a collection of independent and identically distributed random variables $\{ \xi(z),\, z \in\Z^d\}$ satisfying
\[ \Prob ( \xi(z) > x ) = x^{- \alpha}  \quad \mbox{for all } x \geq 1 , \]
for a parameter $\alpha > 0$ and any $z \in \Z^d$. We will also assume that $\alpha > d$, which is known to be necessary for the total mass of the PAM to remain finite \cite{GM90}.

For a fixed environment $\xi$, we denote by $P_y^\xi$  the law of the branching simple random walk in continuous time with binary branching and branching rates $\{\xi(z)\, , \, z \in \Z^d\}$ started with a single particle at site $y$.
Finally, for any measurable set $F \subset \Om$, we define
\[ \p_y ( F \times \cdot) = \int_F P_y^\xi ( \cdot) \,\Prob(d \xi) . \]
If we start with a single particle at the origin, we omit the subscript $y$ and simply write
$P^\xi$ and $\P$ instead of $P_0^\xi$ and $\p_0$.

We define
$Y(z,t)$ to be the set of particles at the point $z$ at time $t$, and let $N(z,t) = \# Y(z,t)$.

We introduce a rescaling of time by a parameter $T > 0$, and then also rescale space and the potential. 
Setting  $q=\frac{d}{\alpha-d}$, the right scaling factors turn out to be
\[ a(T) = \left(\frac{T}{\log T}\right)^q \quad \mbox{and} \quad r(T) = \left(\frac{T}{\log T}\right)^{q+1} \]
for the potential and space respectively. We then define the rescaled lattice as
\[ L_T = \{ z \in \R^d \, : \, r(T) z \in \Z^d \}, \]
and for $z\in\R^d$, $R\geq 0$ define $L_T(z,R) = L_T\cap B(z,R)$. For $z \in L_T$, the rescaled potential is given by 
\[ \xi_T(z) = \frac{\xi(r(T) z)}{a(T)},  \]
and we set $\xi_T(z) = 0$ for $z \in \R^d \setminus L_T$.

\textbf{The branching system}

We are interested primarily in three functions:
\[H_T(z) = \inf\{t\geq0 : Y(r(T)z,tT)\neq \emptyset\}, \]
\[M_T(z,t) = \frac{1}{a(T)T}\log_+ N(r(T)z,tT), \]
and
\[S_T(t) = \{y\in\R^d : H_T(y)\leq t\}, \]
for $z \in L_T$, $t \geq 0$, which we extend to $z \in \R^d$ by linear interpolation.
We call these functions the (rescaled) hitting times, numbers of particles, and support, respectively, of the branching system.

\textbf{The scaling limit: the Poisson lilypad model}

In order to describe the limits of these functions as $T\to\infty$, we suppose that under $\P$ there is an independent Poisson point process $\Pi$ on $\R^d\times[0,\infty)$ with intensity measure $dz \otimes \alpha x^{-(\alpha+1)}dx$. We let $\Pi^\ssup{1}$ be the first marginal of $\Pi$, and write
\[\Pi = \sum_{i=1}^\infty \delta_{(z_i, \xi_\Pi(z_i))}\]
where $z_i$, $i=1,2,\ldots$ are the points in $\supp\Pi^\ssup{1}$.

We define, for $z\in\R^d$ and $t\geq0$,
\[ h(z) = \inf_{  y_1,y_2, \ldots \in \supp \Pi^\ssup{1}, y_n \ra 0  } \bigg\{ \sum_{j=1}^\infty q \frac{| y_{j+1} - y_j|}{\xi_\Pi(y_{j+1})} + q \frac{|y_1 - z|}{\xi_\Pi(y_1)} \bigg\}, \]
\[ m(z,t) = \sup_{y \in \supp\Pi^\ssup{1}} \big\{ \xi_\Pi(y) (t - h(y)) - q |y-z| \big\} \vee 0, \]
and
\[ s(t) = \{y\in\R^d : h(z)\leq t\}.\]
We recall that here and throughout $|\cdot|$ denotes the $L^1$-norm on $\R^d$.
We call these functions the hitting times, numbers of particles, and support, respectively, of the Poisson lilypad process.
We think of each site $y \in \supp \Pi^\ssup{1}$ as being home to a lilypad, which grows at speed $\xi_\Pi(y)/q$. 
However, these lilypads only begin to grow once they are touched by another lilypad. A simulation of the process can be seen at \url{http://tiny.cc/lilypads}.
We will see in Lemma~\ref{le:lilypad_nontrival} that these quantities are non-trivial, so that in particular the system does manage to
start growing from the origin, and does not explode in finite time.

\textbf{Topologies}

Write $C(A,B)$ for the set of continuous functions from $A$ to $B$. We use the following topologies:
\begin{itemize}
 \item For the hitting times: $\cC^d := C(\R^d , [0,\infty))$, equipped with the topology of uniform convergence on compacts, i.e.\  induced by the metric
 \[ d_{U} ( f, g) = \sum_{n \geq 1} 2^{-n}   \Big( \sup_{x \in [-n,n]^d} \{|f(x) - g(x)| \} \wedge 1\Big), \quad f, g \in  \cC^d. \]
 \item For the number of particles:
 \[\cC_0^{d+1} := \{f\in C(\R^d\times[0,\infty), [0,\infty)) : f(x,t)\to 0 \hbox{ as } x\to\infty\,\,\, \forall t\in[0,\infty)\},\] 
 equipped with the topology induced by the metric
 \[ d_{P} (f,g) = \sum_{n \geq 1} 2^{-n}   \Big( \sup_{x\in\R^d, t \in [0,n]} \{|f(x,t) - g(x,t)| \} \wedge 1\Big), \quad f, g \in  \cC_0^{d+1}. \]
 \item For the support: $\cC_F := C([0,\infty), F(\R^d))$, equipped with the topology induced by the metric
 \[ d_{F} ( f, g) = \sum_{n \geq 1} 2^{-n}   \Big( \sup_{t \in [0,n]} \{d_H(f(t),g(t)) \} \wedge 1\Big), \quad f, g \in  \cC_F, \]
 where $F(\R^d)$ is the space of non-empty compact subsets of $\R^d$ and $d_H$ is the Hausdorff distance on $F(\R^d)$.
\end{itemize}

Finally, we consider $(H_T, M_T, S_T)$ and $(h,m,s)$ as elements in the product space $\cC^{\ssup{ \times 3}} := \cC^d \times \cC^{d+1}_0 \times \cC_F$
equipped with the product topology, which is, for example, induced by the metric
\[ d^\ssup{\times 3} ((H, M, S), ( H', M', S')) = d_{U} ( H, H') + d_{P} ( M, M') + d_{F}( S, S') , \]
for any $(H, M, S), ( H', M', S') \in \cC^\ssup{\times 3}$.

\subsection{Main results}

Our main theorem states that the rescaled branching system (hitting times, number of particles and support) converges weakly to the Poisson lilypad model. For background on weak convergence, we refer to~\cite{Bil99, EthierKurtz86}.

\begin{thm}\label{thm:weak_conv}
The triple $(H_T, M_T, S_T)$ converges weakly in $\cC^\ssup{\times 3}$ as $T \ra \infty$ to $(h,m,s)$.
\end{thm}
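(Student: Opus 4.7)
The strategy is a two-step reduction. The first step is to invoke the main results of \cite{OR14}: the rescaled branching triple $(H_T, M_T, S_T)$ is shown there to differ from (a rescaled version of) the \emph{discrete} lilypad model $(h_T, m_T, s_T)$ by a vanishing error in the product topology, in probability. So Theorem~\ref{thm:weak_conv} reduces to proving that the discrete lilypad model converges weakly to the Poisson lilypad model $(h,m,s)$. This reduction moves the analysis from a random branching system to a purely environment-driven, deterministic (given $\xi$) object defined by a variational formula — essentially the same formula appearing in the definition of $h$, but with the supremum/infimum taken over $L_T$ with the potential $\xi_T$ instead of over $\supp\Pi^\ssup{1}$.

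For the second step, the plan is a continuous mapping argument driven by point-process convergence. The rescaling is chosen precisely so that the random set $\{(z, \xi_T(z)) : z\in L_T, \xi_T(z) > \delta\}$ converges in distribution to a Poisson point process on $\R^d\times(\delta,\infty)$ with intensity $dz\otimes \alpha x^{-(\alpha+1)}dx$; this is standard from the Pareto tail plus the identity $r(T)^d a(T)^{-\alpha}=1$. Using Skorokhod's representation theorem I would realise all the relevant point processes on one probability space so that the truncated configurations converge almost surely (say vaguely on compacts). Then I would show that the truncated lilypad functionals $(h^\delta_T, m^\delta_T, s^\delta_T)$, defined by restricting the variational formulas to sites with $\xi_T > \delta$, are continuous in the point configuration and hence converge almost surely to their Poisson analogues $(h^\delta, m^\delta, s^\delta)$ in the topologies of $\cC^d$, $\cC_0^{d+1}$, and $\cC_F$ on compacts.

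The real work is the double truncation: showing that $(h^\delta_T, m^\delta_T, s^\delta_T) \to (h_T, m_T, s_T)$ and $(h^\delta, m^\delta, s^\delta)\to (h,m,s)$ as $\delta\downarrow 0$, uniformly in $T$ in the first case. The key quantitative input is a comparison between the cost $q|y-y'|/\xi_\Pi(y')$ of stepping through a low-value site and the cost of detouring to a nearby high-value site. Since the density of sites with $\xi > x$ scales like $x^{-\alpha}$ and stepping costs scale like $1/\xi$, a sub-optimality estimate of the form ``no optimal chain from a compact set to $0$ uses a site with $\xi \le \delta$'' can be proved with probability tending to $1$ as $\delta \downarrow 0$, uniformly in $T$. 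Combined with Lemma~\ref{le:lilypad_nontrival} (stated as non-explosion and non-triviality of the Poisson lilypad), this gives the required approximation.

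The main obstacle I expect is controlling the infimum in the definition of $h$ near the origin: the Poisson process has infinitely many points accumulating at every scale, so one must show that (i) the infimum is a.s.\ attained (or at least finite) by chains avoiding low-$\xi$ points, and (ii) the discrete approximations $h_T$ cannot exploit an accumulation of spurious low-$\xi$ lattice sites to obtain an artificially small value. Once this truncation is under control, the convergence on compacts together with a rough Lipschitz-in-$z$ bound of the form $h(z)\le h(0) + q|z|/\xi_\Pi(\text{nearest high point})$ (and its discrete analogue) upgrades to the global metrics $d_U, d_P, d_F$ by a tightness-at-infinity argument: points arriving in $B(0,R)^c$ before time $t$ must have travelled through an efficient chain whose length grows with $R$, yielding standard tail bounds.
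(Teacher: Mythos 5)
Your overall architecture matches the paper's: reduce to the discrete lilypad model via Theorem~\ref{thm:discrete_lilypad}, use the point-process convergence $\Pi_T\Rightarrow\Pi$, make the lilypad functionals continuous by a $\delta$-truncation plus a spatial restriction, apply continuous mapping, and control the truncation error uniformly in $T$ (your use of Skorokhod representation in place of the paper's bounded-Lipschitz assembly of in-probability approximations is an immaterial difference). However, there is a genuine gap in the step you yourself identify as the real work. The truncation you propose --- restricting the variational formulas to sites with $\xi_T>\delta$ --- does not by itself define a usable approximation of the hitting times: a.s.\ only finitely many points of $\Pi$ (and, uniformly in $T$, of $\Pi_T$) in any ball around the origin carry potential larger than $\delta$, so a chain through $\delta$-large sites cannot accumulate at $0$, and you have not said how such a chain terminates or what the final leg costs. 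Worse, your key quantitative claim --- that with probability tending to $1$ no optimal chain from a compact set to $0$ uses a site with $\xi\le\delta$ --- is false: every chain realizing (or nearly realizing) $h_\nu(z)$ must converge to the origin and therefore necessarily passes through sites of arbitrarily small potential. Your worry (ii), that an accumulation of low-$\xi$ lattice sites could make $h_T$ artificially small, is also not the danger, since low-potential sites only make steps more expensive.

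What is actually needed, and what the paper does, is different in substance: define $h^\delta_\nu$ by allowing a final jump from the last chain point $y_n$ straight to the origin at a guaranteed minimum speed $\delta/q$ (so $h^\delta_\nu\le h_\nu$ automatically), and then prove $h_\nu\le h^\delta_\nu+\eps$ (Proposition~\ref{prop:del_app_general}) by showing that any optimal truncated chain must end within distance $\delta$ of the origin --- otherwise a nearby high-potential point would give a strictly better value --- and that the residual time to reach such a $y_n$ is of order $\delta^{1-\gamma}$. Both steps rest on the two environment conditions of Section~\ref{ssn:delta_hitting}: an upper bound on the potential in large balls and the existence, at every dyadic scale $r2^{-k}$ near the origin, of a point with potential at least $(r2^{-k})^\gamma$; these hold with high probability for $\Pi$ and for $\Pi_T$ uniformly in large $T$ (Lemma~\ref{le:conditions_satisfied}, which for $\Pi_T$ relies on estimates from the earlier paper rather than a soft intensity computation). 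In addition, a cut at the level $\xi=\delta$ alone does not make the functional depend on the configuration only through a compact subset of $E$; one also needs the spatial restriction to $\cB(0,1/\delta)$ (the paper's $\tilde h^\delta$, $\tilde m^\delta$, $\tilde s^\delta$ and Lemma~\ref{le:tildegood}) before continuity in the vague topology, and hence the continuous mapping step, goes through. Without the minimum-speed device and the near-origin scale analysis, your plan does not close.
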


As an application, we show that the maximal site in the branching system---that is, the site with the most particles at a given time---shows {\em ageing} behaviour. Denote by $Z^{\rm max}(t)$ this site: that is,
\[N(Z^{\rm max}(t),t) \geq N(z,t) \quad \forall z\in\Z^d;\]
in case of a tie choose the point with larger potential. Introduce the rescaled version
\[ W_T(t) := Z^{\rm max}(tT) / r(T). \]
Also let $w(t)$ be the maximizer in the Poisson lilypad model,
\[ m(w(t),t) \geq m(z,t) \quad \forall z\in\R^d;\]
again in the case of a tie we choose the site with larger potential (although we will show in Lemma \ref{le:Poisson_max_unique} that for any $t\geq0$ there is almost surely a unique maximizer for the Poisson lilypad model).

\begin{thm}\label{thm:ageing}
 \emph{Ageing.} For any $\theta > 0$,
 \[ \p ( Z^{\rm max}_{T} = Z^{\rm max}_{(1+\theta) T} ) = \p( W_T(1) = W_T(1+ \theta)) \ra \p ( w(1) = w(1+ \theta) ) . \]
\end{thm}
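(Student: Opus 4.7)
The first equality is immediate from the definitions; the substance is the convergence of probabilities, which I would prove by combining Theorem~\ref{thm:weak_conv} with a continuous-mapping argument applied to the joint argmax functional at the two times $1$ and $1+\theta$. The overall plan: use the almost sure uniqueness of the Poisson lilypad maximizer (Lemma~\ref{le:Poisson_max_unique}) at each fixed time, then invoke the Skorokhod representation theorem to couple so that $(H_T,M_T,S_T)\to(h,m,s)$ almost surely on a common probability space, deduce $W_T(t)\to w(t)$ in $\Rb^d$ for $t\in\{1,1+\theta\}$, and finally argue that the discrete event $\{W_T(1)=W_T(1+\theta)\}$ matches the limiting event $\{w(1)=w(1+\theta)\}$ for all large $T$.

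For the convergence of argmaxes: since $m(\cdot,t)\in\cC_0^{d+1}$ decays to $0$ at infinity and $m(w(t),t)>0$ for $t\geq 1$, the sequence $W_T(t)$ is eventually contained in a fixed compact set (outside of which $M_T(\cdot,t)$ lies below half of $m(w(t),t)$, by uniform convergence on compacts). Any subsequential limit in $\Rb^d$ is then a maximizer of $m(\cdot,t)$, and hence equals $w(t)$ by uniqueness; so $W_T(t)\to w(t)$ almost surely.

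The main obstacle is passing from this approximate convergence to the exact equality of indicators. If $w(1)\neq w(1+\theta)$, the limits are distinct so $W_T(1)\neq W_T(1+\theta)$ for all large $T$. The converse is the delicate one: if $w(1)=w(1+\theta)=:w^\ast$, I need both $W_T(1)$ and $W_T(1+\theta)$ to equal a common lattice point for all large $T$, even though $L_T$ contains many points near $w^\ast$. Here I would appeal to the classical convergence of rescaled Pareto extremes: the random measure $\sum_{z\in L_T}\delta_{(z,\xi_T(z))}\ind_{\{\xi_T(z)>\eps\}}$ converges jointly with $(H_T,M_T,S_T)$ to $\Pi$ restricted to $\Rb^d\times(\eps,\infty)$, since the environment is shared. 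Under this augmented coupling, high-potential rescaled lattice points in any bounded region are in bijection with nearby Poisson points for large $T$. Since $W_T(t)$ must itself be a high-potential point (the dominant lilypad requires rescaled potential of the same order as $\xi_\Pi(w^\ast)$, via the defining formula for $m$), both $W_T(1)$ and $W_T(1+\theta)$ must equal the unique lattice preimage of $w^\ast$ under this bijection, and so coincide.

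Bounded convergence then transfers the almost sure convergence of indicators $\ind\{W_T(1)=W_T(1+\theta)\}\to\ind\{w(1)=w(1+\theta)\}$ to convergence of the probabilities, completing the argument. The bijective correspondence between high-potential rescaled lattice points and Poisson points that I would need for the hard direction is not a formal consequence of Theorem~\ref{thm:weak_conv} alone, but should be readily extracted by re-running the arguments of~\cite{OR14} with the point-process enhancement in place.
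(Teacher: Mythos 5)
Your overall architecture (first equality by definition, then transfer the event along the scaling limit, handling separately the cases $w(1)\neq w(1+\theta)$ and $w(1)=w(1+\theta)$) is reasonable, and the paper also has to fight exactly the ``hard direction'' you identify. But there is a genuine gap at the step where you assert that $W_T(t)$ ``must itself be a high-potential point \ldots via the defining formula for $m$''. The defining formula controls the maximizer of the lilypad tent functions $m_T(\cdot,t)$ (resp.\ $m(\cdot,t)$), not the maximizer $W_T(t)$ of the actual particle numbers. Theorem~\ref{thm:discrete_lilypad} only gives $\sup_z|M_T(z,t)-m_T(z,t)|\to0$, and since $m_T(\cdot,t)$ has slope $q$ near its peak, \emph{every} lattice site within distance $o(1)$ of the lilypad maximizer has $m_T$-value within $o(1)$ of the maximum; there are order $r(T)^d\,\delta^d$ such sites in $L_T$, almost all of them of low potential. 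A uniform $o(1)$ approximation therefore cannot decide at which of these sites the literal argmax of $N(\cdot,tT)$ sits, so neither the scaling limit nor the joint convergence of the environment point process (your proposed ``point-process enhancement'', which in any case is not proved in the paper) can deliver the conclusion that $W_T(1)$ and $W_T(1+\theta)$ are the \emph{same} lattice point. The missing input is a statement about the particle system itself, not the environment: the paper closes exactly this gap in Lemma~\ref{le:close_not_equal} by invoking the one-point localisation theorem of the companion paper~\cite{onepoint} (which identifies $W_T(t)$ with the discrete lilypad maximizer $w_T(t)$ with high probability) and then an explicit two-point Pareto estimate showing that two distinct high-potential sites of $L_T$ are not within distance $\delta$ except with probability $O(\delta^d)$. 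Without a substitute for that localisation result, ``re-running the arguments of~\cite{OR14}'' will not produce your bijection argument's premise, and the hard direction does not go through.

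A secondary, more technical issue: after the Skorokhod representation you only have copies of $(H_T,M_T,S_T)$ and $(h,m,s)$, whereas the event $\{W_T(1)=W_T(1+\theta)\}$ (with ties broken by potential) is a functional of the particle configuration and the environment; you would need to carry these along in the coupling and check measurability. The paper avoids both this and the argmax-convergence machinery by working with the portmanteau theorem directly: it sandwiches $\p(W_T(1)=W_T(1+\theta))$ between probabilities of an open set $\cO_\theta^\delta$ and closed sets $C_\theta(z,\delta)$ of functions whose time-$1$ and time-$(1+\theta)$ maxima are attained in a common small ball, and then uses the ``close implies equal'' Lemmas~\ref{le:close_not_equal} and~\ref{le:Poisson_not_equal} together with uniqueness of the Poisson maximizer (Lemma~\ref{le:Poisson_max_unique}) to remove the $\delta$-slack on both sides.
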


In the companion paper~\cite{onepoint}, we show that 
with high probability, the total mass of the  branching process is 
concentrated in a single point, so the theorem   really 
describes ageing, i.e.\ the temporal slow-down, of this maximizer.

The strategy of proof of Theorem~\ref{thm:weak_conv} relies on our previous result from~\cite{OR14}, which 
shows that the branching system is well described by a functional purely of
the environment, which we call the discrete lilypad model and recall in 
Section~\ref{ssn:discrete_lilypad}.
Then, our main task is to show that the discrete lilypad model converges
to the Poisson lilypad model that we described above. 
The underlying reason is that the rescaled environment converges to a Poisson process; see Section~\ref{ssn:point_pr}
for some background. 
The proof of Theorem~\ref{thm:weak_conv} is then an application of 
the continuous mapping theorem for a suitable continuous approximation 
of the lilypad models, which we describe in Section~\ref{sec:proof_scaling}. 
This approach allows us to avoid some of the technicalities involved with a more traditional 
approach of showing tightness combined with the convergence of finite dimensional distributions.
The proof of Theorem~\ref{thm:ageing} in Section~\ref{sec:proof_ageing} is then 
an application of the scaling limit.

Throughout the article, the ideas remain fairly simple, but there are many technicalities due to the highly sensitive nature of the model. For example, if one site of large potential is hit slightly earlier or later than it should be, the whole system could be affected dramatically. We have to keep track of several events that could, feasibly, occur; show that they have small probability; and show that if these events do not occur then the system behaves as we claim.

\subsection{The discrete lilypad model}\label{ssn:discrete_lilypad}

In~\cite{OR14}, we showed that the branching system is well-approximated
by certain functionals of the environment, which we will refer to as the \emph{discrete
lilypad model}.
For any site $z \in L_T$, we set
\[ h_T(z) = \inf_{ \substack{y_0,\ldots,y_n \in L_T:\\ y_0 = z, y_n = 0}} \Bigg( \sum_{j=1}^{n} q\frac{| y_{j-1} - y_{j}|}{\xi_T(y_{j})} \Bigg) .\]
We call $h_T(z)$ the first hitting time of $z$ in the discrete lilypad model. 
We think of each site $y$ as being home to a lilypad, which grows at speed $\xi_T(y)/q$. Note that $h_T(0)=0$.
For convenience, we interpolate $h_T$ linearly to define the values for $z \notin L_T$.
The rescaled number of particles in the discrete lilypad model is defined as
\[m_T(z,t) = \sup_{y \in L_T}\{\xi_T(y)(t-h_T(y)) - q|z-y|\}\vee 0. \] 
Also, we define the support of particles at time $t$ in the discrete lilypad model as
\[ s_T(t) = \{ z \in \R^d \, : \, h_T(z) \leq t \}. \]
We recall here the main result from~\cite{OR14}, which can be phrased as:

\begin{thm}[\cite{OR14}]\label{thm:discrete_lilypad}
For any $t_\infty > 0$, 
as $T \ra \infty$, 
\[ \bal 
\sup_{t \leq t_\infty} \sup_{z \in L_T} | M_T(z,t) - m_T(z,t) | \rightarrow 0 \quad \mbox{ in } \p\mbox{-probability} . 
\eal \]
Moreover, for any $R > 0$, as $T \rightarrow \infty$,
\[ \sup_{z \in L_T(0,R)} | H_T(z) - h_T(z) | \rightarrow 0 \quad \mbox{ in } \p\mbox{-probability} ,
 \]
and  for any $t_\infty >0$, as $T \ra \infty$,
\[ \sup_{t \leq t_\infty} d_H( S_T(t), s_T(t)) \rightarrow  0 \quad \mbox{in } \p\mbox{-probability}. \]
\end{thm}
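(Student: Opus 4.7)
The three statements are linked through the hitting-time bound, which is the central estimate. Once $\sup_{z\in L_T(0,R)}|H_T(z) - h_T(z)| \to 0$ in probability is known, the support convergence follows since $S_T(t)$ and $s_T(t)$ are sublevel sets at level $t$ of uniformly close functions (after first showing that $s_T(t_\infty)$ is contained in a fixed ball $B(0,R)$ with high probability, so the relevant comparison takes place in $L_T(0,R)$), and uniformity in $t$ comes from a modulus of continuity for $h_T$. The $M_T$ convergence will follow by running the same argument with a free terminal time $t$ rather than the first hitting time, since the best growth strategy at $z$ up to time $tT$ is to reach some site $y$ quickly (in time $h_T(y)T$) and then branch there (producing $\exp(a(T)T\xi_T(y)(t-h_T(y)))$ descendants), subject to a final ballistic cost $\exp(-qa(T)T|y-z|)$ to reach $z$, exactly recovering the formula for $m_T$.

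For the hitting-time bound, the first step is a structural reduction: using the Pareto tail and $\alpha > d$, one shows that with high probability the infimum defining $h_T(z)$ is attained, to within any fixed $\eps > 0$, on a path $0 = y_0, y_1, \ldots, y_n = z$ of uniformly bounded combinatorial length through sites in a compact region whose rescaled potentials exceed a threshold $\delta > 0$; call such a path a \emph{skeleton}. For the upper bound $H_T(z) \leq h_T(z) + o(1)$, one argues inductively along a near-optimal skeleton: given that the process has reached $r(T)y_j$ by time $\approx h_T(y_j)T$, in the next time window of length $\Delta t_j := q|y_{j+1}-y_j|/\xi_T(y_j)$ the number of descendants grows to order $\exp(qa(T)T|y_{j+1}-y_j|)$, while the probability that a single such descendant performs a simple random walk of $L^1$-length $k = r(T)|y_{j+1}-y_j|$ in time $T\Delta t_j$ is, by standard Poisson-tail and transition estimates, of order $(cT\Delta t_j/k)^k$. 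The key scaling identity $a(T)T/r(T) = \log T$ makes this last factor $\exp(-qa(T)T|y_{j+1}-y_j| + o(a(T)T))$, so the expected number of successful jumps diverges and at least one particle reaches $r(T)y_{j+1}$ by time $\approx (h_T(y_{j+1})+\eps)T$; chaining over the finitely many segments finishes the upper bound.

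The matching lower bound is technically the hardest step and is the main obstacle of the proof. Via a many-to-one (Feynman--Kac) formula one writes the expected number of particles at $r(T)z$ by time $(h_T(z)-\eps)T$ as a sum over branching-walk paths of a product of branching factors (exponentials of $\xi$-times-occupation-time) and random-walk transition factors. For each individual path, the logarithm of the contribution is, by the same scaling identity, at most $a(T)T$ times the negative of the discrete-lilypad functional along that path, and hence at most $-\eps a(T)T + o(a(T)T)$; the issue is that a single atypically large Pareto potential in an unlucky position could, a priori, create a shortcut and blow up the sum. This is handled by splitting the environment into a \emph{bulk} with $\xi_T \leq \delta$ and a finite collection of high points (governed by a Poisson approximation with explicit intensity), thereby reducing the number of candidate skeletons to a bound depending only on $\delta$ and $R$, and controlling paths that detour through the bulk by large-deviation estimates for the simple random walk. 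A union bound then yields $\P(H_T(z) < h_T(z) - \eps) \to 0$, and a Lipschitz property of $h_T$ on the skeleton upgrades this pointwise bound to the uniform statement on $L_T(0,R)$.
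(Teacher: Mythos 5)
There is nothing in this paper to compare your argument against: Theorem~\ref{thm:discrete_lilypad} is stated with the attribution to~\cite{OR14} and is \emph{not} proved here. The authors import it wholesale from their earlier paper (``Intermittency for branching random walk in Pareto environment'') and use it as a black box; the only role it plays in the present article is to reduce Theorem~\ref{thm:weak_conv} to the convergence of the discrete lilypad model to the Poisson lilypad model. So the honest assessment is that you have attempted to reconstruct the proof of an external result, and the paper offers no internal proof to measure it against.

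Judged on its own terms, your sketch is a reasonable outline of the known strategy (many-to-one/Feynman--Kac expectation bounds for the upper bound on the number of particles, chaining of hitting times along near-optimal lilypad sequences for the lower bound, and deduction of the support statement from uniform closeness of $H_T$ and $h_T$), but several of the steps you assert are exactly where the real work lies and are not yet proofs. In particular: (a) the reduction to ``skeletons'' of uniformly bounded combinatorial length through finitely many high-potential sites in a compact region is a nontrivial environment estimate and needs quantitative statements of the type of conditions~\ref{asmp:pot_large} and~\ref{asmp:small_pts} together with union bounds, not just the remark that $\alpha>d$; (b) in the inductive upper bound for $H_T$, passing from ``the expected number of successful crossings diverges'' to ``at least one particle arrives'' requires making explicit that, conditionally on the particles present at $r(T)y_j$, their subsequent trajectories are independent, and then a concentration or second-moment step --- expectation alone does not suffice; (c) the statement for $M_T$ is uniform over \emph{all} $z\in L_T$ and $t\le t_\infty$, and the statement for $H_T$ uniform over $L_T(0,R)$; a ``Lipschitz property along the skeleton'' does not give this, one needs bounds valid simultaneously for every lattice site (e.g.\ Markov plus union bounds over the polynomially many sites that can matter, and an a priori confinement of the support); and (d) the lower bound $M_T\ge m_T-o(1)$ needs a genuine branching lower bound (particles arriving at $y$ by time roughly $h_T(y)T$ must then multiply at rate $\xi_T(y)$ and spread ballistically to $z$), which your sketch mentions only in passing. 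None of these gaps indicates a wrong approach, but as written the proposal is a plan for the proof in~\cite{OR14} rather than a proof.
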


We reiterate here the general idea behind this article: we know from Theorem \ref{thm:discrete_lilypad} that the branching system is well-approximated (with high probability) by the discrete lilypad model, which is a deterministic functional of the environment $\xi$. We can check that the distribution of $\xi$ (suitably rescaled) converges weakly to that of a Poisson point process; and this allows us to show that the discrete lilypad model converges weakly to the Poisson lilypad model.

\subsection{Background on point processes}\label{ssn:point_pr}

The proof of our main result, Theorem~\ref{thm:weak_conv}, is a 
consequence of the convergence of the rescaled environment to a Poisson process.
In this section we recall some of the standard definitions concerning point processes.

We consider the point process
\[ \Pi_T := \sum_{ z\in \Z^d} \delta_{( \frac{z}{r(T)},  \frac{\xi_T(z)}{a(T)}) } . \]
on $\R^d \times (0, \infty) $. A classical result in extreme value theory shows that
$\Pi_T$ converges in law to the Poisson point process $\Pi$ on $\R^d \times (0,\infty)$ with 
intensity measure 
\[ \pi( d (z,x) ) =  dz \otimes \, \frac{\alpha}{x^{\alpha+1}}\,d x . \]
In order to formalize this convergence we follow the basic 
setup  from~\cite{HMS08}, which is based on~\cite{Res08}.
	Let $E$ be a locally compact space with a countable basis and let $\cE$ denote the Borel-$\sigma$-algebra on $E$. 
A Radon measure is a Borel measure
that is locally finite. If in addition $\mu = \sum_{ i \geq 1} \delta_{x_i}$ for a countable 
collection of points $\{ x_i, i\ge 1 \} \subset E$, then $\mu$ is called a point measure.  We write $M_p(E)$ for the set of all point measures on $E$.
We equip the set of Radon measures $M_+(E)$
with the vague topology: i.e.\ $\mu_n \ra \mu$ vaguely, if for any 
continuous function $f : E \ra \R$ with compact support $\int f \mu_n \ra \int f \mu$.
Note that $M_p(E)$ is vaguely closed in $M_+(E)$ (cf.~\cite[Prop.~3.14]{Res08}).

In our case we set $E = \R^d \times (0,\infty]$, where the topology on $(0,\infty]$ is understood such that 
closed neighbourhoods of $\infty$ are compact. Note that $\Pi_T$ and $\Pi$ are elements of $M_p(E)$
for this choice.
Then the above convergence means that $\Pi_T \Rightarrow \Pi$
in the topology on $M_p(E)$ induced by vague convergence. This fact is a direct application of~\cite[Prop.~3.21]{Res08}
(where $\R^d$ replaces  $\R^+$ as the index set).

\section{Proof of the scaling limit}\label{sec:proof_scaling}

In this section we prove the main scaling limit, Theorem~\ref{thm:weak_conv}. 
By our previous result on the approximation via the discrete model, Theorem~\ref{thm:discrete_lilypad},
it suffices to show convergence of the discrete lilypad model. 
Our main strategy is to use the continuous mapping theorem to deduce the convergence
of $(h_T, m_T,s_T)$ from the convergence of the point process $\Pi_T$ to $\Pi$. 
Unfortunately, however, it is not clear that $(h_T,m_T,s_T)$ is a continuous function of the underlying
point process. Our way around this problem is to define an $\delta$-approximate lilypad model
for both the discrete space version and the Poisson model. 
By ignoring potential values less than $\delta$---and, later, restricting in space 
to $B(0,1/\delta)$---we obtain functionals that only depend on a finite set of points
and are therefore continuous.

We can treat both the discrete space and the Poisson case
in the same way. Thus, for $\nu = \Pi_T$, for some $T>0$,
or $\nu = \Pi$, we write $\nu\in M_p(E)$ as
\[ \nu = \sum_{i \geq 1}  \delta_{(z_i , \xi_\nu(z_i))}  ,\]
and write $\nu^\ssup{1} (\cdot) := \nu ( \,\,\cdot \times [0,\infty))$ for the first marginal of $\nu$. For $r>0$, we write $B_\nu(0,r) = \supp(\nu^\ssup{1})\cap B(0,r)$ and $\cB_\nu(0,r)=\supp(\nu^\ssup{1})\cap \cB(0,r)$. Where it is clear which point process we are referring to, we write $\xi(z)$ in place of $\xi_\nu(z)$ for conciseness. (Of course, we have already defined $\{\xi(z):z\in\Z^d\}$ to be a collection of i.i.d.~Pareto random variables; but since we already know from Theorem \ref{thm:discrete_lilypad} that the branching process is well approximated by the discrete lilypad model, which can be described via the point process $\Pi_T$, we no longer need this original meaning and $\xi(z)$ will always refer to $\xi_\nu(z)$ for some point process $\nu$.)

For a general point process $\nu$, we define the hitting times by setting $h_\nu(0)=0$ and, for $z\in\R^d\setminus\{0\}$,
\[h_\nu(z) =  \inf \Big\{ q \sum_{i=1}^\infty \frac{|y_i - y_{i-1}|}{\xi(y_i)}  \, : \, y_0 = z, y_i \in \supp\nu^\ssup{1} \ \forall i \geq 1\, , \, |y_i | \ra 0 \Big\} . \]
The number of particles is defined as
\[m_\nu(z,t) = \sup_{y \in \supp \nu^\ssup{1}} \Big\{ \xi(y) ( t - h_\nu(y) ) - q |y-z| \Big\} \vee 0 , \quad z \in \R^d, t \geq 0, \]
and the support is defined as
\[ s_\nu(t) = \{ z \in \R^d \,  : \, h_\nu(z) \leq t \} , \quad  t \geq 0 . \]

We also define the $\delta$-hitting times by setting
\[ h^\delta_\nu(z)  = \inf \Big\{ \sum_{j=1}^n q \frac{|y_{j-1} - y_j|}{\xi(y_j)} + q\frac{|y_n|}{\delta}  \, : \,n\in\N_0, y_0 = z \mbox{ and }y_1, \ldots, y_{n} \in \supp\nu^\ssup{1} \Big\}  \]
for any $z\in\R^d$ (note that we allow $n = 0$, in which case we do not insist on $y_n \in \supp(\nu^\ssup{1})$). Effectively, considering $h^\delta_\nu(z)$ rather than $h_\nu(z)$ gives all lilypads a ``minimum speed'' $\delta/q$, which helps in showing the continuity of the process as a function of the point measure $\nu$. In analogy with the definitions above, we also define the $\delta$-number of particles and the $\delta$-support via
\[  m_\nu^\delta(z,t) = \sup_{y \in \supp \nu^\ssup{1}} \Big\{ \xi(y) ( t - h_\nu^\delta(y) ) - q |y-z| \Big\} \vee 0 , \quad z \in \R^d, t \geq 0, \]
and
\[ s_\nu^\delta(t) = \{ z \in \R^d \,  : \, h_\nu^\delta(z) \leq t \} , \quad  t \geq 0 . \]
We write $(h_T^\delta, m_T^\delta, s_T^\delta) := (h_{\Pi_T}^\delta, m_{\Pi_T}^\delta, s_{\Pi_T}^\delta)$
and $(h^\delta, m^\delta, s^\delta) := (h_{\Pi}^\delta, m_{\Pi}^\delta, s_{\Pi}^\delta)$.

The main technical result of this section is the following proposition. 

\begin{prop}\label{prop:appr_works} For any $\eps > 0$,
\[\lim_{\delta \downarrow 0} \limsup_{T\to\infty}\p \Big( d^{(\times 3)} \big( ( h_T^\delta, m_T^\delta, s_T^\delta ), (h_T,m_T,s_T) \big) \geq \eps  \Big)  =  0, \]
and analogously for the Poisson point process
\[\lim_{\delta \downarrow 0}  \p \Big( d^{(\times 3)} \big( ( h^\delta, m^\delta, s^\delta ), (h,m,s) \big) \geq \eps  \Big)  =  0 . \]
\end{prop}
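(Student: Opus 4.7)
My plan is to work within the unified setup of Section~2, treating a generic point measure $\nu$, and to handle the three coordinates of $d^{\ssup{\times 3}}$ in turn. The first basic observation is the comparison $h_\nu^\delta \leq h_\nu$ for every $\delta > 0$: given any near-minimiser $y_0 = z, y_1, y_2, \ldots$ for $h_\nu(z)$ with $|y_i| \to 0$, truncating at some index $n$ and adding the penalty $q|y_n|/\delta$ yields an admissible $\delta$-path whose cost converges to $h_\nu(z)$ as $n \to \infty$. The second observation is that $h_\nu^\delta$ is monotone non-decreasing as $\delta \downarrow 0$.

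For the reverse inequality, take a $\delta$-near-optimal path $z = y_0, y_1, \ldots, y_n$ of cost at most $h_\nu^\delta(z) + \eps \leq h_\nu(z) + \eps$. The terminal penalty $q|y_n|/\delta$ is dominated by the total cost, so $|y_n| \leq \delta (h_\nu(z)+\eps)/q \to 0$ as $\delta \downarrow 0$. Extending this finite path by a near-optimal admissible path from $y_n$ to the origin produces a candidate for $h_\nu(z)$ of cost at most $h_\nu^\delta(z) + \eps + h_\nu(y_n)$, and continuity of $h_\nu$ at the origin (provided by Lemma~\ref{le:lilypad_nontrival}) forces $h_\nu(y_n) \to 0$, giving $h_\nu(z) \leq \lim_{\delta\downarrow 0} h_\nu^\delta(z)$. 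Combining the two directions yields pointwise convergence. To upgrade to uniform convergence on any compact ball $B(0,R)$, I argue that with high probability the near-optimal $\delta$-paths from points in $B(0,R)$ stay in a deterministic bounded region (since the total cost is $O(1)$ and points with large enough potential to permit long jumps are locally finite), so that $h_\nu^\delta$ is a minimum over finitely many Lipschitz functions of $z$ and hence continuous. Dini's theorem applied to the monotone sequence $h_\nu^\delta \uparrow h_\nu$ of continuous functions then delivers uniform convergence.

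The convergences for $m_\nu^\delta$ and $s_\nu^\delta$ reduce to the uniform convergence of hitting times. Using $h_\nu^\delta \leq h_\nu$, we have $m_\nu^\delta \geq m_\nu$ and
\[0 \leq m_\nu^\delta(z,t) - m_\nu(z,t) \leq \sup_{y \in \supp\nu^\ssup{1} \cap s_\nu(n)} \xi(y)\big(h_\nu(y) - h_\nu^\delta(y)\big)_+ \quad \text{for } t \leq n,\]
where the restriction to $s_\nu(n)$ is because any $y$ with $h_\nu(y) > n \geq t$ makes a negative contribution. The compactness of $s_\nu(n)$ (again Lemma~\ref{le:lilypad_nontrival}) and the boundedness of $\xi$ on this set turn the uniform convergence of $h_\nu^\delta$ to $h_\nu$ on compacts into the uniform convergence of $m_\nu^\delta$ to $m_\nu$ on $\R^d \times [0,n]$; the vanishing-at-infinity required by $\cC_0^{d+1}$ is automatic because both functions are identically zero outside a ball containing $s_\nu(n)$. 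The Hausdorff convergence $s_\nu^\delta(t) \to s_\nu(t)$, uniformly in $t \in [0,n]$, follows because these are sublevel sets of $h_\nu^\delta$ and $h_\nu$, which differ uniformly by an arbitrarily small amount.

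The Poisson statement of the proposition is then immediate. For the discrete statement, the same deterministic scheme applies with $\nu = \Pi_T$, but the various estimates must hold with high probability uniformly in $T$. This uniformity follows from the weak convergence $\Pi_T \Rightarrow \Pi$ on $M_p(E)$: the good events---lower bounds on the maximal potential of $\Pi_T$ in small balls around the origin, boundedness of $s_{\Pi_T}(n)$, and the associated continuity of $h_{\Pi_T}$---have probabilities converging to those of their Poisson analogues, which have full probability. The main obstacle is making the first of these quantitative enough to bound the extension cost $h_{\Pi_T}(y_n)$ uniformly in $T$ as $|y_n| \to 0$: this amounts to showing, with probability tending to one uniformly in $T$, that every small neighbourhood of the origin contains a point of $\Pi_T$ whose potential is comparable to the value predicted by the Poisson limit, which is a routine extreme-value estimate but the step that drives the technical content.
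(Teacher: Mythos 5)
Your overall strategy is the same as the paper's (a deterministic two-sided comparison of $h_\nu^\delta$ with $h_\nu$ on good events for the point process, transfer to $m_\nu$ and $s_\nu$, then verification that the good events hold with probability close to one, uniformly in $T$), but there is a genuine gap in the support component. You assert that the Hausdorff convergence of $s_\nu^\delta(t)$ to $s_\nu(t)$ ``follows because these are sublevel sets of $h_\nu^\delta$ and $h_\nu$, which differ uniformly by an arbitrarily small amount.'' That implication is false in general: uniform closeness only gives the sandwich $s_\nu(t)\subseteq s_\nu^\delta(t)\subseteq s_\nu(t+\eta)$, and a sublevel set at level $t+\eta$ can be far in Hausdorff distance from the one at level $t$ if the hitting-time function is nearly flat just above level $t$ (here this is not a pathology: a lilypad of very large potential sweeps a large region in a short time, so $s_\nu$ genuinely grows fast where the potential is large). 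What is needed is a modulus-of-growth statement: for every $\eps>0$ and $t_0>0$ there is $\eta>0$ with $s_\nu(t+\eta)\subseteq\bigcup_{y\in s_\nu(t)}B(y,\eps)$ for all $t\le t_0$. The paper proves exactly this in Lemma~\ref{le:le61}, by truncating near-optimal paths and using the potential bound \ref{asmp:pot_large} on the relevant ball; your proposal contains no such argument, and without it the $d_F$ part of the proposition does not follow.

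Two further steps rest on statements that are not available as you cite them. First, ``continuity of $h_\nu$ at the origin'' is not provided by Lemma~\ref{le:lilypad_nontrival} (which is only non-triviality $0<h_\nu(z)<\infty$); the bound $h_\nu(y_n)\to0$ requires a quantitative statement that balls $B(0,r2^{-k})$ contain points of potential at least $(r2^{-k})^\gamma$ at every scale, i.e.\ condition \ref{asmp:small_pts} and Lemma~\ref{le:small_times} -- you concede this at the end, but your Dini step additionally needs continuity of the limit $h_\nu$ on compacts, which you never establish (it is an infimum over a dense set of lilypads of arbitrarily small potential, so continuity is not automatic from your ``finitely many Lipschitz functions'' remark, which applies only to $h_\nu^\delta$); the paper sidesteps Dini entirely by proving the uniform two-sided bound $h_\nu^\delta\le h_\nu\le h_\nu^\delta+\eps$ on all of $\R^d$ in Proposition~\ref{prop:del_app_general}. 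Second, the uniformity in $T$ does not follow from $\Pi_T\Rightarrow\Pi$ alone: the good events involve all small spatial scales near the origin and all large radii, so they are not continuity sets for the vague topology, and their probabilities need not converge; the paper instead uses direct Pareto estimates uniform in $T$ (Lemma~\ref{le:conditions_satisfied}, via \cite[Lemma 2.7]{OR14}) combined with union bounds over scales. A smaller, fixable slip: in your bound for $m_\nu^\delta-m_\nu$ the supremum should run over $\{y: h_\nu^\delta(y)\le n\}$ (or a ball outside which $h_\nu^\delta>n$ uniformly in $\delta\le1$, via Lemma~\ref{le:large_hitting}), not over $s_\nu(n)$, since a point with $h_\nu^\delta(y)<t<h_\nu(y)$ can contribute positively to $m_\nu^\delta$.
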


The remainder of this section is organised as follows. In Section~\ref{ssn:delta_hitting}, we give general
criteria on the point process $\nu$ that ensure that the $\delta$-hitting times 
approximate well the actual hitting times. 
Then in Section~\ref{ssn:delta_support} we show that this result can be transferred
to the number of particles and the support. In Section~\ref{ssn:delta_works} we show that
these general criteria are satisfied by the point processes $\Pi_T$ and $\Pi$, and
we prove Proposition~\ref{prop:appr_works}.
Finally, in Section~\ref{ssn:proof_scaling},
we show that the $\delta$-processes for $\Pi_T$ converge to the $\delta$-processes for $\Pi$, and we combine these results to show the statement
of the main scaling limit Theorem~\ref{thm:weak_conv}.

\subsection{The $\delta$-approximation of the hitting times} \label{ssn:delta_hitting}
 
We now state certain  assumptions on the point process $\nu$ under which $h_\nu^\delta$ and $h_\nu$ will be close when $\delta$ is small. Let $\gamma = \frac{d+\alpha}{2\alpha}$.
\begin{enumerate}[label={(A\arabic*)}]
 \item \label{asmp:pot_large} For all $R\geq R_0$, $\sup_{y \in B_\nu(0,R)} \xi(y) \leq q R^\gamma$.
 \item \label{asmp:small_pts} For all $r\leq r_0$, for all $k\in \N_0$, there exists $Z_k \in B_\nu(0,r2^{-k})$ such that $\xi(Z_k) \geq r^\gamma 2^{-k\gamma}$.
\end{enumerate}
We write \ref{asmp:pot_large}$_{R_0}$ and \ref{asmp:small_pts}$_{r_0}$ to emphasize the dependence of the conditions on the parameters. 

The main result in this subsection states that the hitting times are approximated well by the $\delta$-hitting times, provided $\nu$ satisfies the above conditions.

\begin{prop}\label{prop:del_app_general}
Suppose that $\nu$ satisfies~\ref{asmp:pot_large}$_{R_0}$ and~\ref{asmp:small_pts}$_{r_0}$. Then for any $\eps>0$, there exists $\delta>0$ (depending only on $\gamma$, $\eps$, $R_0$ and $r_0$) such that
\[h_\nu^\delta(z) \leq h_\nu(z) \leq h_\nu^\delta(z)+\eps \quad \forall z\in \R^d.\]
\end{prop}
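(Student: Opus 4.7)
The plan is to prove the two inequalities separately.

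The lower bound $h_\nu^\delta(z) \leq h_\nu(z)$ comes for free. Given any admissible sequence $y_0 = z, y_1, y_2, \ldots$ with $y_i \in \supp\nu^\ssup{1}$ for $i \geq 1$ and $|y_i| \to 0$, truncating after $n$ steps gives an admissible choice in the $\delta$-problem, so
\[ h_\nu^\delta(z) \leq \sum_{i=1}^n \frac{q|y_{i-1}-y_i|}{\xi(y_i)} + \frac{q|y_n|}{\delta}. \]
Sending $n \to \infty$ (so $|y_n| \to 0$) and then taking the infimum over sequences yields the inequality. Neither \ref{asmp:pot_large} nor \ref{asmp:small_pts} is needed here.

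For the matching upper bound $h_\nu(z) \leq h_\nu^\delta(z) + \eps$, my approach is to take an $\eta$-near-optimizer $y_0 = z, y_1, \ldots, y_n$ for $h_\nu^\delta(z)$ and extend it into a tail converging to $0$ using \ref{asmp:small_pts}. Fix $r \leq r_0$ to be chosen, and pick $Z_k \in B_\nu(0, r 2^{-k})$ with $\xi(Z_k) \geq r^\gamma 2^{-k\gamma}$ for each $k \geq 0$. Append $Z_0, Z_1, Z_2, \ldots$ to the near-optimizer; since $|Z_{k-1} - Z_k| \leq 3r \cdot 2^{-k}$ and $\gamma = \frac{d+\alpha}{2\alpha} < 1$ (because $\alpha > d$), the tail cost is bounded by a convergent geometric series:
\[ \sum_{k=1}^\infty \frac{q|Z_{k-1}-Z_k|}{\xi(Z_k)} \leq 3q r^{1-\gamma} \sum_{k=1}^\infty 2^{-k(1-\gamma)} =: C r^{1-\gamma}. \]
The single connecting hop from $y_n$ to $Z_0$ contributes at most $q(|y_n|+r)/r^\gamma = q|y_n|/r^\gamma + q r^{1-\gamma}$.

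The crucial device is the choice $\delta \leq r^\gamma$: this forces $q|y_n|/r^\gamma \leq q|y_n|/\delta$, so the potentially long connecting step is already paid for by the $\delta$-penalty $q|y_n|/\delta$ present in the near-minimizer's cost. Combining everything,
\[ h_\nu(z) \leq \sum_{j=1}^n \frac{q|y_{j-1}-y_j|}{\xi(y_j)} + \frac{q|y_n|}{\delta} + (q+C) r^{1-\gamma} \leq h_\nu^\delta(z) + \eta + (q+C) r^{1-\gamma}. \]
Picking $r \leq r_0$ so that $(q+C) r^{1-\gamma} \leq \eps/2$, then $\delta \leq r^\gamma$, and $\eta \leq \eps/2$, gives the desired bound uniformly in $z$.

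The main obstacle is controlling the connecting hop uniformly in $z$, since the endpoint $y_n$ of a $\delta$-near-minimizer can be far from the origin when $z$ itself is far---a naive bound would pay $q|y_n|/r^\gamma$ which does not vanish with $r$. The pairing $\delta \leq r^\gamma$ is precisely what neutralizes this, letting the $\delta$-penalty swallow the long-range connection cost and leaving only an $O(r^{1-\gamma})$ remainder. Assumption \ref{asmp:pot_large} is not used explicitly in the argument above, though it may be needed for refinements or uniqueness issues elsewhere in the development.
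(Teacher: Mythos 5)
Your proof is correct, but it takes a genuinely different route from the paper's. The paper first invokes Corollary~\ref{cor:restrict} (which rests on \ref{asmp:pot_large}$_{R_0}$ through Lemma~\ref{le:large_hitting}) to restrict the infimum defining $h_\nu^\delta(z)$ to the finitely many points of $B_\nu(0,R)$ with $\xi\geq\delta$, so that the infimum is attained; it then shows by an exchange argument---inserting a point $Z\in B_\nu(0,\delta/4)$ with $\xi(Z)\geq 2\delta$ would strictly improve any admissible path ending at a point with $|y_n|\geq\delta$---that the exact minimizer's endpoint satisfies $|y_n|<\delta$, and concludes via Lemma~\ref{le:small_times} that $h_\nu(z)\leq h_\nu^\delta(z)+h_\nu(y_n)\leq h_\nu^\delta(z)+\eps$. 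You instead work with an $\eta$-near-minimizer, so you never need attainment (and hence never need \ref{asmp:pot_large}$_{R_0}$), and you absorb the cost of the single long hop from $y_n$ to $Z_0$ into the penalty term $q|y_n|/\delta$ that is already part of the near-minimizer's cost, via the coupling $\delta\leq r^\gamma$; the remaining tail estimate is the same geometric series built from \ref{asmp:small_pts}$_{r_0}$ that underlies Lemma~\ref{le:small_times}. Your argument is more elementary and slightly more general, showing the inequality needs only \ref{asmp:small_pts}$_{r_0}$ with $\delta$ depending only on $\gamma$, $q$, $\eps$ and $r_0$, uniformly in $z$; what the paper's detour through exact minimizers buys is structural information (restriction of the optimization to a fixed compact set, and the fact that optimal $\delta$-paths end within $B(0,\delta)$) that is reused elsewhere in their development, but for the statement of Proposition~\ref{prop:del_app_general} itself your shortcut is sound.
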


We will also need the following two simple lemmas, which prove upper and lower bounds on the hitting times.

\begin{lem}\label{le:small_times}
Suppose that $\nu$ satisfies ~\ref{asmp:small_pts}$_{r_0}$. Then for any $r\leq r_0$,
\[ \max_{z \in B(0,r)} h_\nu(y) \leq \frac{ 4 q r^{1- \gamma}}{1 - 2^{\gamma-1}} \]
and moreover, for any $z\in\R^d$,
 \[  h_\nu(z) \leq \frac{ 4 q r_0^{1- \gamma}}{1 - 2^{\gamma-1}}  + q ( r_0^{-\gamma} |z| + r_0^{1 - \gamma} ). \]
\end{lem}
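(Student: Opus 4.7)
The plan is to build explicit candidate paths in the infimum defining $h_\nu$, using only the points $Z_k$ supplied by \ref{asmp:small_pts}. Recall that the hypothesis $\alpha > d$ forces $\gamma = (d+\alpha)/(2\alpha) < 1$, so that $2^{\gamma-1} < 1$ and the quantity $1 - 2^{\gamma-1}$ is strictly positive; this is exactly what will make the geometric sums below converge.

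For the first bound, fix $r \leq r_0$ and $z \in B(0,r)$, and for each $k \in \N_0$ let $Z_k$ be the point produced by \ref{asmp:small_pts}$_{r_0}$ at this $r$, so that $Z_k \in \supp \nu^\ssup{1}$, $|Z_k| < r 2^{-k}$ and $\xi(Z_k) \geq r^\gamma 2^{-k\gamma}$. I take the admissible path $y_0 = z$ and $y_i = Z_{i-1}$ for $i \geq 1$; the requirement $|y_i| \to 0$ is immediate. The triangle inequality yields $|y_1 - y_0| \leq 2r$, and for $i \geq 2$ it yields $|y_i - y_{i-1}| \leq r 2^{-(i-1)} + r 2^{-(i-2)} = 3r \cdot 2^{-(i-1)}$. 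Dividing through by $\xi(y_i) \geq r^\gamma 2^{-(i-1)\gamma}$ and summing the resulting geometric series gives
\[ h_\nu(z) \;\leq\; q \Bigl( 2 r^{1-\gamma} + 3 r^{1-\gamma} \sum_{k=1}^\infty 2^{-k(1-\gamma)} \Bigr) \;=\; q r^{1-\gamma}\, \frac{2 + 2^{\gamma-1}}{1 - 2^{\gamma-1}} \;\leq\; \frac{4 q r^{1-\gamma}}{1 - 2^{\gamma-1}}, \]
where in the last step I use $2^{\gamma-1} < 1$ to bound $2 + 2^{\gamma-1} < 4$.

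For the second bound, the key remark is an elementary concatenation inequality: for any $w \in \supp \nu^\ssup{1}$ and any $z \in \R^d$, prepending the edge $z \to w$ to any admissible path for $h_\nu(w)$ produces an admissible path for $h_\nu(z)$, so $h_\nu(z) \leq q|w - z|/\xi(w) + h_\nu(w)$. Applying this with $w = Z_0$, the point supplied by \ref{asmp:small_pts}$_{r_0}$ at $r = r_0$ and $k = 0$, one has $|Z_0| < r_0$ and $\xi(Z_0) \geq r_0^\gamma$, so the first summand is at most $q(|z| + r_0)/r_0^\gamma = q(r_0^{-\gamma}|z| + r_0^{1-\gamma})$. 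Since $Z_0 \in B(0, r_0)$, the first part of the lemma controls $h_\nu(Z_0) \leq 4 q r_0^{1-\gamma}/(1-2^{\gamma-1})$, and adding the two estimates produces exactly the required inequality. The only real obstacle is the arithmetic of tracking the constants; conceptually the argument is a single explicit path construction together with the elementary concatenation step.
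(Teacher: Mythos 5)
Your proof is correct and follows essentially the same route as the paper: the explicit path $z \to Z_0 \to Z_1 \to \cdots$ from \ref{asmp:small_pts}$_{r_0}$, a geometric series bounded using $\gamma<1$, and then the concatenation $h_\nu(z)\leq q|z-Z_0|/\xi(Z_0)+h_\nu(Z_0)$ with $r=r_0$ for the second claim. The only difference is slightly sharper constant bookkeeping in the series, which still lands under the stated bound $4qr^{1-\gamma}/(1-2^{\gamma-1})$.
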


\begin{lem}\label{le:large_hitting}
Suppose that $\nu$ satisfies \ref{asmp:pot_large}$_{R_0}$. Then for any $R\geq R_0$ and any $\delta>0$,
\[ \inf_{y \not\in B(0,R) } h^\delta_\nu(y) \geq \min\{R^{1-\gamma}, qR/\delta\}.\]
\end{lem}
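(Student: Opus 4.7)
\emph{Proof plan.} Fix $R \geq R_0$, $\delta > 0$, $y \notin B(0,R)$, and an admissible path $y_0 = y, y_1, \ldots, y_n$ with $y_j \in \supp \nu^\ssup{1}$ for $j \geq 1$; write $C$ for its cost and $\rho_j := |y_j|$. The plan is to split into the easy case $\rho_n \geq R$, in which $C \geq q\rho_n/\delta \geq qR/\delta$ is immediate, and the remaining case $\rho_n < R$, where I will establish a coarea-type lower bound on the sum of edge costs.

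Noting that \ref{asmp:pot_large}$_{R_0}$ is equivalent to $\xi(y') \leq q\max(|y'|,R_0)^\gamma$ for every $y' \in \supp \nu^\ssup{1}$, I would observe that each \emph{descending} edge (those with $\rho_{j-1}>\rho_j$) satisfies, via the triangle inequality and monotonicity of $s\mapsto\max(s,R_0)$,
\[ \frac{q|y_{j-1}-y_j|}{\xi(y_j)} \;\geq\; \frac{\rho_{j-1}-\rho_j}{\max(\rho_j,R_0)^\gamma} \;\geq\; \int_{\rho_j}^{\rho_{j-1}} \frac{ds}{\max(s,R_0)^\gamma}. \]
Because $\rho_0 \geq R > \rho_n$, a discrete intermediate value argument shows every level $s \in (\rho_n, R)$ is crossed downwards by at least one edge, so summing the display above yields
\[ C \;\geq\; F(\rho_n), \qquad \text{where } F(\rho) := \int_{\rho}^R \frac{ds}{\max(s,R_0)^\gamma} + \frac{q\rho}{\delta}. \]

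The remaining task is to verify that $F(\rho) \geq \min(R^{1-\gamma}, qR/\delta)$ for every $\rho \in [0,R]$. The function $F$ is linear on $[0,R_0]$, strictly convex on $[R_0,R]$, and $C^1$ at $R_0$, so its minimum is attained either at an endpoint or at the unique interior stationary point $\rho^* = (\delta/q)^{1/\gamma}$ (when $\rho^* \in (R_0,R)$). One has $F(R) = qR/\delta$, and using $\int_{R_0}^R s^{-\gamma}ds = (R^{1-\gamma}-R_0^{1-\gamma})/(1-\gamma)$,
\[ F(0) = R_0^{1-\gamma} + \frac{R^{1-\gamma}-R_0^{1-\gamma}}{1-\gamma} = \frac{R^{1-\gamma} - \gamma R_0^{1-\gamma}}{1-\gamma} \geq R^{1-\gamma}, \]
the last step using $R \geq R_0$ and $1-\gamma \in (0,1)$. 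The same identity with $R_0$ replaced by $\rho^* \leq R$ (valid since at $\rho^*$ one has $q\rho^*/\delta = (\rho^*)^{1-\gamma}$) gives $F(\rho^*) \geq R^{1-\gamma}$ as well. Taking infima over paths and then over $y \notin B(0,R)$ will complete the proof.

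The main subtlety, beyond the routine minimisation, is handling the fact that the path is not assumed radially monotone and may make edges that skip several levels at once; this is precisely resolved by observing that one only needs every level in $(\rho_n, R)$ to be descended through \emph{at least once}, which is what makes the coarea-type lower bound go through cleanly.
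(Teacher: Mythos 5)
Your plan is correct, but it follows a genuinely different route from the paper. The paper first uses continuity of $z\mapsto h^\delta_\nu(z)$ to pick a minimizer $\tilde z$ of $h_\nu^\delta$ on $\partial B(0,R)$, shows by a path-interpolation (surgery) argument that the infimum over $\R^d\setminus B(0,R)$ is attained at $\tilde z$ and that the optimizing paths for $\tilde z$ may be restricted to $B(0,R)$, and then concludes in one line: the total displacement is at least $R$, traversed at speed at most $\max_{B_\nu(0,R)}\xi/q\leq R^\gamma$ except for the final leg at speed $\delta/q$, giving $\min\{R^{1-\gamma},qR/\delta\}$; only the single value $R$ of \ref{asmp:pot_large}$_{R_0}$ is used. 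You instead bound an arbitrary admissible path directly, via the pointwise form $\xi(y')\leq q\max(|y'|,R_0)^\gamma$ of \ref{asmp:pot_large}$_{R_0}$ (note $B_\nu(0,R)$ is defined with the open ball, so this needs the trivial limiting step $R\downarrow|y'|$, which your formulation implicitly contains), a radial level-crossing (coarea) estimate on the descending edges, and an explicit convex minimization of $F(\rho)=\int_\rho^R\max(s,R_0)^{-\gamma}\,ds+q\rho/\delta$. Your computations check out: the edge bound, the covering of every level in $[\rho_n,R)$ by a descending edge, $F(R)=qR/\delta$, $F(0)=\frac{R^{1-\gamma}-\gamma R_0^{1-\gamma}}{1-\gamma}\geq R^{1-\gamma}$, and $F(\rho^*)\geq R^{1-\gamma}$ all hold, using $\gamma=\frac{d+\alpha}{2\alpha}\in(\smfr12,1)$ (which is where $\alpha>d$ enters, as in the paper); the degenerate case where $F$ is flat on $[0,R_0]$ is covered by the value $F(0)$. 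What the two approaches buy: yours dispenses with the continuity claim and the boundary/surgery argument, works for paths that oscillate radially without any reduction, and in fact yields a slightly sharper, radius-sensitive bound; the paper's argument avoids any calculus and is shorter once the restriction to $B(0,R)$ is in place, and its surgery step is reused (via Corollary~\ref{cor:restrict}) elsewhere in the section.
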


The lemmas lead easily to two useful corollaries.

\begin{cor}\label{cor:restrict}
Suppose that $\nu$ satisfies~\ref{asmp:pot_large}$_{R_0}$. Then for any $z\in\R^d$ and any $\delta>0$, there exists $R>0$ (depending only on $\gamma$, $R_0$ and $\delta$) such that the infimum in the definition of $h^\delta_\nu(z)$ can be restricted to points $y_1,\ldots,y_n\in B_\nu(0,R)$.
\end{cor}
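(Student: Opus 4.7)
The plan is to combine the trivial upper bound $h_\nu^\delta(z) \leq q|z|/\delta$ (obtained by taking $n = 0$ in the definition of $h_\nu^\delta$) with the lower bound from Lemma~\ref{le:large_hitting} to rule out any near-optimal path that strays too far from the origin. The key observation is that if a candidate path $y_0 = z, y_1, \ldots, y_n \in \supp \nu^\ssup{1}$ visits some intermediate point $y_k \notin B(0,R)$ with $R \geq R_0$, then the tail $(y_k, y_{k+1}, \ldots, y_n)$ together with the final penalty $q|y_n|/\delta$ is itself a valid path in the definition of $h_\nu^\delta(y_k)$. Consequently the total cost of the original path, after discarding the non-negative contribution of the initial segment $y_0, \ldots, y_k$, is at least $h_\nu^\delta(y_k) \geq \min\{R^{1-\gamma}, qR/\delta\}$ by Lemma~\ref{le:large_hitting}.

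I would then choose $R = R(\gamma, R_0, \delta, z) \geq R_0$ large enough that
\[ \min\{R^{1-\gamma}, qR/\delta\} > q|z|/\delta. \]
Since $\gamma = (d+\alpha)/(2\alpha) < 1$ thanks to the standing assumption $\alpha > d$, both $R^{1-\gamma}$ and $qR/\delta$ increase to infinity as $R \to \infty$, so such an $R$ exists and can be written down explicitly in terms of $\gamma$, $R_0$, $\delta$ and $|z|$. With this choice, any path that passes through a point outside $B(0,R)$ has total cost strictly larger than $q|z|/\delta \geq h_\nu^\delta(z)$ and therefore cannot be close to attaining the infimum. Hence the infimum in the definition of $h_\nu^\delta(z)$ is unchanged if we restrict $y_1, \ldots, y_n$ to $\supp \nu^\ssup{1} \cap B(0,R) = B_\nu(0,R)$, proving the claim.

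There is essentially no substantive obstacle here: the proof is routine bookkeeping around the two ingredients (the trivial $n = 0$ bound and Lemma~\ref{le:large_hitting}). The one wrinkle worth flagging is that the corollary as written asserts that $R$ depends only on $\gamma$, $R_0$ and $\delta$, whereas the argument above produces an $R$ that also depends on $|z|$; this extra dependence is harmless for the intended use, since the corollary is invoked pointwise in $z$ in the subsequent arguments of Section~\ref{ssn:delta_hitting}.
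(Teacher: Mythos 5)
Your argument is correct and essentially the same as the paper's: the paper likewise fixes $z$, invokes Lemma~\ref{le:large_hitting} to choose $R$ so that $\inf_{y\notin B(0,R)}h_\nu^\delta(y)$ exceeds $h_\nu^\delta(z)$ (you simply substitute the trivial bound $h_\nu^\delta(z)\leq q|z|/\delta$ and spell out the tail-of-path comparison the paper leaves implicit). Your remark about the dependence of $R$ on $|z|$ is fair, but the paper's own proof has the same feature, and as you note it is harmless since the corollary is only ever applied for a fixed $z$.
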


\begin{cor}\label{cor:nontriv}
Suppose that $\nu$ satisfies~\ref{asmp:pot_large}$_{R_0}$ and~\ref{asmp:small_pts}$_{r_0}$ for some $R_0$ and $r_0$. Then for all $z\in\R^d\setminus\{0\}$ and all $\delta>0$, we have
\[0<h_\nu^\delta(z)\leq h_\nu(z)<\infty.\]
\end{cor}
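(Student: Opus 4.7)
The plan is to prove the three inequalities $0<h_\nu^\delta(z)$, $h_\nu^\delta(z)\leq h_\nu(z)$, and $h_\nu(z)<\infty$ separately, each being a short consequence of results already proved in this subsection.

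For the outer two bounds: the finiteness $h_\nu(z)<\infty$ is immediate from Lemma~\ref{le:small_times}, whose explicit upper bound $h_\nu(z)\leq 4qr_0^{1-\gamma}/(1-2^{\gamma-1})+q(r_0^{-\gamma}|z|+r_0^{1-\gamma})$ is finite because $\gamma=(d+\alpha)/(2\alpha)<1$ (as $\alpha>d$). For the comparison $h_\nu^\delta(z)\leq h_\nu(z)$, I would take any sequence admissible in the definition of $h_\nu(z)$, namely $y_0=z$ with $y_i\in\supp\nu^\ssup{1}$ for $i\geq 1$ and $|y_i|\to 0$. For each $n\geq 1$ the truncation $y_0,\ldots,y_n$ is admissible in the definition of $h_\nu^\delta(z)$, giving
\[h_\nu^\delta(z)\leq\sum_{j=1}^n q\frac{|y_{j-1}-y_j|}{\xi(y_j)}+q\frac{|y_n|}{\delta}.\]
Sending $n\to\infty$, the partial sum increases monotonically to $\sum_{j=1}^\infty q|y_{j-1}-y_j|/\xi(y_j)$ while the boundary term vanishes since $|y_n|\to 0$; taking the infimum over admissible sequences yields the inequality.

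For the positivity $h_\nu^\delta(z)>0$ with $z\neq 0$, I would invoke Corollary~\ref{cor:restrict} to select (after enlarging if needed) some $R\geq R_0$ such that the infimum defining $h_\nu^\delta(z)$ may be taken over sequences with $y_1,\ldots,y_n\in B_\nu(0,R)$. Assumption~\ref{asmp:pot_large}$_{R_0}$ then forces $\xi(y_j)\leq qR^\gamma$ for every such $y_j$, so the cost of any such sequence is at least
\[\frac{1}{R^\gamma}\sum_{j=1}^n|y_{j-1}-y_j|+\frac{q}{\delta}|y_n|\geq\min\{R^{-\gamma},q/\delta\}\Bigl(\sum_{j=1}^n|y_{j-1}-y_j|+|y_n|\Bigr)\geq\min\{R^{-\gamma},q/\delta\}\cdot|z|,\]
where the last step is the triangle inequality $|z|=|y_0|\leq\sum_{j=1}^n|y_{j-1}-y_j|+|y_n|$; the degenerate case $n=0$ gives cost $q|z|/\delta>0$ directly. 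Hence $h_\nu^\delta(z)\geq\min\{R^{-\gamma},q/\delta\}|z|>0$.

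I don't foresee any serious obstacle; the argument merely packages Corollary~\ref{cor:restrict} and Lemma~\ref{le:small_times} together with the basic fact that a path from $z$ to points approaching the origin has total length at least $|z|$. The only subtlety is that Corollary~\ref{cor:restrict} is genuinely needed in the positivity step: without restricting to a bounded ball one cannot apply the uniform bound $\xi(y_j)\leq qR^\gamma$ from~\ref{asmp:pot_large}$_{R_0}$, and a priori one might worry about very large $\xi$-values at distant points making the cost cheap. In the comparison step a small amount of care is needed with the truncation and the residual term $q|y_n|/\delta$, but this is handled transparently by the assumption $|y_n|\to 0$.
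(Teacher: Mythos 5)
Your proposal is correct and follows essentially the same route as the paper: positivity via Corollary~\ref{cor:restrict} together with~\ref{asmp:pot_large}$_{R_0}$ and the triangle inequality, the comparison $h_\nu^\delta(z)\leq h_\nu(z)$ straight from the definitions (your truncation argument just makes this explicit), and finiteness from Lemma~\ref{le:small_times}. You merely spell out details the paper leaves implicit, so there is nothing to correct.
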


We delay the proofs of the lemmas and corollaries for a moment to concentrate on Proposition \ref{prop:del_app_general}.

\begin{proof}[Proof of Proposition \ref{prop:del_app_general}]
The fact that $h_\nu^\delta(z) \leq h_\nu(z)$ for all $z\in\R^d$ follows immediately from the definitions, so we aim to prove that $h_\nu(z) \leq h_\nu^\delta(z)+\eps$.

Since $\gamma<1$ we may choose $\delta>0$ small enough so that
\begin{equation}\label{eq:del1}
4q\delta^{1-\gamma}\Big(\frac{1}{1-2^{\gamma-1}}\Big) \leq \eps,
\end{equation}
\begin{equation}\label{eq:del2}
(\delta/4)^{\gamma} \geq 2\delta \quad \mbox{ and } \quad \delta\leq r_0.
\end{equation}

By Corollary \ref{cor:restrict}, there exists $R>0$ such that the infimum in the definition of $h_\nu^\delta(z)$ is taken over points $y_1,\ldots,y_n\in B_\nu(0,R)$; we also note from the definition that necessarily $\xi(y_i)\geq\delta$ for each $i=1,\ldots,n$. Since the set $B(0,R)\times[\delta,\infty)$ is relatively compact in $E$, and $\nu$ is a Radon measure, there are only finitely many such points. Thus the infimum is actually a minimum, and we can find points $y_0=z,y_1,\ldots,y_n$ such that
\begin{equation}\label{eq:2212-2}h_\nu^\delta(z) = \sum_{i=1}^{n} q \frac{|y_{i-1}- y_i|}{\xi(y_i)} + q \frac{|y_n|}{\delta}.\end{equation}
Note from the definition of $h_\nu$ that
\[h_\nu(z) \leq h_\nu(y_n) + q\sum_{i=1}^{n} \frac{|y_i-y_{i-1}|}{\xi(y_i)} \leq h_\nu(y_n) + h_\nu^\delta(z),\]
so it remains to prove that $h_\nu(y_n)\leq \eps$.

By Lemma \ref{le:small_times} and the fact that $\delta\leq r_0$, together with \eqref{eq:del1}, we have
\[ \max_{y\in B(0,\delta)} h_\nu(y) \leq \eps.\]
Thus it suffices to prove that $|y_n|<\delta$.

By \ref{asmp:small_pts}$_{r_0}$ with $r=\delta\leq r_0$ and $k=2$, we can choose $Z\in B(0,\delta/4)$ such that $\xi(Z)\geq (\delta/4)^{\gamma} \geq 2\delta$ by~\eqref{eq:del2}. Suppose that $|y_n|\geq\delta$. Then
\[ \frac{|Z|}{\delta} + \frac{|Z - y_n|}{\xi(Z)} 
\leq \frac{|Z|}{\delta} + \frac{|Z - y_n|}{2 \delta}  \leq
\frac{|Z|}{\delta} + \frac{|Z|}{2\delta} + \frac{|y_n|}{2 \delta}
\leq \frac{3}{2} \frac{\delta/4}{\delta} + \frac{|y_n|}{2 \delta}
< \frac{|y_n|}{\delta} . 
\]
Thus by including $Z$ in the approximating sequence we get a smaller value of $h_\nu^\delta(z)$ than~\eqref{eq:2212-2}, contradicting the optimality of the sequence $y_0,\ldots,y_n$. We deduce that $|y_n| < \delta$ as required.
\end{proof}

We now proceed with the proofs of the lemmas. Lemma \ref{le:small_times} follows easily from the assumption~\ref{asmp:small_pts}$_{r_0}$:

\begin{proof}[Proof of Lemma \ref{le:small_times}]
 Fix $r\leq r_0$ and let $Z_k$, $k \geq 0$, be as in~\ref{asmp:small_pts}$_{r_0}$. Then by definition, for any $z \in B(0,r)$, 
 we have 
 \[ h_\nu(z) \leq q \frac{|z - Z_0|}{\xi(Z_0)} + q \sum_{j = 1}^\infty \frac{|Z_{j-1} - Z_{j}|}{\xi(Z_{j})} \leq 2q \frac{r}{r^\gamma} + q \sum_{j=1}^\infty  \frac{ 2 r 2^{-(j-1)} }{ r^\gamma 2^{- \gamma j }} \leq 4 q r^{1- \gamma} \frac{1}{1 - 2^{\gamma-1}} .\]

For the second claim, taking $r=r_0$ in the above, we have that for any $z$,
\[ h_\nu(z) \leq h_\nu(Z_0)+ q\frac{|z- Z_0|}{\xi(Z_0)} 
\leq \frac{ 4 q r_0^{1- \gamma}}{1 - 2^{\gamma-1}}  + \frac{q (|z| + r_0)}{r_0^\gamma}.\qedhere \]
\end{proof}

Lemma \ref{le:large_hitting} is slightly more fiddly.

\begin{proof}[Proof of Lemma \ref{le:large_hitting}]
It is easy to see from the definition that $z\mapsto h^\delta_\nu(z)$ is continuous. Therefore there exists a point $\tilde z \in \partial B(0,R)=\{z\in\R^d : |z|=R\}$ that minimizes $h^\delta_\nu$, i.e.\ 
\[h^\delta_\nu(\tilde z) = \inf_{ y \in \partial B(0,R)} h_\nu^\delta(y).\]

We claim that $h^\delta_\nu(\tilde z) = \inf_{|y| \geq R} h_\nu^\delta(y)$. Indeed, suppose there exists $y\not\in B(0,R)$ with $h_\nu^\delta(y) < h_\nu^\delta(\tilde z)$. Then we can choose $y_0=z, y_1,\ldots, y_n$ with
\[q\sum_{j=1}^n \frac{|y_j-y_{j-1}|}{\xi(y_j)} + q\frac{|y_n|}{\delta} < h_\nu^\delta(\tilde z).\]
We may assume without loss of generality that $y_1\in B(0,R)$ (since clearly $h_\nu^\delta(y_1) < h_\nu^\delta(\tilde z)$, so we can otherwise use $y_1$ in place of $y$). Therefore there exists $a\in(0,1)$ such that $\tilde y:=y_1 + a(y-y_1)\in \partial B(0,R)$. Then
\begin{align*}
h_\nu^\delta(\tilde y) \leq q\frac{|y_1-\tilde y|}{\xi(y_1)} + q\sum_{j=2}^n \frac{|y_j-y_{j-1}|}{\xi(y_j)} + q\frac{|y_n|}{\delta} &= qa\frac{|y_1- y|}{\xi(y_1)} + q\sum_{j=2}^n \frac{|y_j-y_{j-1}|}{\xi(y_j)} + q\frac{|y_n|}{\delta}\\
& < q\sum_{j=1}^n \frac{|y_j-y_{j-1}|}{\xi(y_j)} + q\frac{|y_n|}{\delta} = h_\nu^\delta(\tilde z),
\end{align*}
contradicting the choice of $\tilde z$. Therefore the claim holds.

Since $h_\nu^\delta(y) > h_\nu^\delta(\tilde z)$ for all $y\not\in B(0,R)$, we see that the infimum in the definition of $h_\nu^\delta(\tilde z)$ can be restricted to points within $B(0,R)$: that is,
\[h_\nu^\delta (\tilde z) = \inf\Big\{q \sum_{j=1}^n \frac{|y_{j-1} - y_j|}{\xi(y_j)} + q \frac{|y_n|}{\delta} \, : \,n\in\N_0, y_0 = x \mbox{ and }y_1, \ldots, y_{n} \in B_\nu(0,R) \Big\}.\]
In particular,
\[h_\nu^\delta(\tilde z) \geq \min\Big\{ \frac{qR}{\max_{y \in B_\nu(0,R)} \xi(y)} , \frac{qR}{\delta} \Big\},\]
and therefore by~\ref{asmp:pot_large}$_{R_0}$, if $R\geq R_0$ then
\[\inf_{y\not\in B(0,R)} h_\nu^\delta(y) \geq \min\{ R^{1-\gamma}, qR/\delta\}.\qedhere\]
\end{proof}

\begin{proof}[Proof of Corollary \ref{cor:restrict}]
Fix $z\in\R^d$. By Lemma \ref{le:large_hitting}, we can choose $R$ large enough such that
\[\inf_{y\not\in B(0,R)} h_\nu^\delta(y) > h_\nu^\delta(z).\]
Therefore the infimum in the definition of $h_\nu^\delta(z)$ can be restricted to points within $B(0,R)$.
\end{proof}

\begin{proof}[Proof of Corollary \ref{cor:nontriv}]
Take any $z\in\R^d\setminus\{0\}$ and $\delta>0$. By Corollary \ref{cor:restrict}, there exists $R>0$ such that the infimum in the definition of $h^\delta_\nu(z)$ can be restricted to points within $B(0,R)$, so
\[h_\nu^\delta(z) \geq \min\Big\{\frac{q|z|}{\max_{y\in B_\nu(0,R)}\xi(y)},\frac{q|z|}{\delta}\Big\} > 0.\]
The fact that $h_\nu^\delta(z)\leq h_\nu(z)$ follows directly from the definitions; and $h_\nu(z)<\infty$ by Lemma \ref{le:small_times}.
\end{proof}

\subsection{The $\delta$-approximation of the support and number of particles}\label{ssn:delta_support}

We recall that
\[  m_\nu(z,t) = \sup_{y \in \supp \nu^\ssup{1}} \Big\{ \xi(y) ( t - h_\nu(y) ) - q |y-z| \Big\} \vee 0 , \quad z \in \R^d, t \geq 0 \]
and
\[ s_\nu(t) = \{ z \in \R^d \,  : \, h_\nu(z) \leq t \} , \quad  t \geq 0, \]
and that $m_\nu^\delta(z,t)$ and $s^\delta_\nu(t)$ are defined similarly by replacing $h_\nu$ by $h_\nu^\delta$. 
In this subsection, we show that under~\ref{asmp:pot_large}$_{R_0}$ and~\ref{asmp:small_pts}$_{r_0}$, the $\delta$-approximations $m_\nu^\delta$ and $s_\nu^\delta$ are close to $m_\nu$ and $s_\nu$ respectively.

We start by showing that the growth of the support $s_\nu$ is well-controlled. This will be key to controlling the Hausdorff distance between $s_\nu$ and $s_\nu^\delta$.

\begin{lemma}\label{le:le61}
Suppose that $\nu$ satisfies~\ref{asmp:pot_large}$_{R_0}$. For any $\eps>0$ and any $t_0>0$, there exists $\eta \in (0,1)$ (depending only on $\gamma$, $\eps$, $t_0$ and $R_0$) such that
\[  s_\nu(t+\eta) \subseteq \bigcup_{y \in s_\nu(t)} B(y,\eps) \quad \forall t\leq t_0. \]
\end{lemma}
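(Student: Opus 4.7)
The intuition is that if $z\in s_\nu(t+\eta)$, then the lilypad that reaches $z$ at time $t+\eta$ was, at time $t$, already at some nearby position $y'$; the displacement $|y'-z|$ is at most the lilypad's speed times $\eta$. Since \ref{asmp:pot_large}$_{R_0}$ caps the speed of any lilypad sitting in $B(0,R)$ by $R^\gamma$, this gives a bound of order $R^\gamma\eta$, provided we can first confine the whole construction to a ball $B(0,R)$ whose radius depends only on $t_0$, $R_0$, $\gamma$. The proof then proceeds by traversing a near-optimal approximating chain for $h_\nu(z)$ and stopping at the first index $J$ at which the hitting time has dropped to $\leq t$.

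I would first establish a uniform confinement. Combining $h_\nu\geq h_\nu^\delta$ with Lemma~\ref{le:large_hitting} (applied with any $\delta\in(0,qR_0^\gamma]$ small enough that the minimum equals $R^{1-\gamma}$) yields $\inf_{y\notin B(0,R)}h_\nu(y)\geq R^{1-\gamma}$ for every $R\geq R_0$. Fixing
\[R=\max\{R_0,(t_0+2)^{1/(1-\gamma)}\}\]
ensures that any point with $h_\nu\leq t_0+1$ lies in $B(0,R)$, and hence satisfies $\xi\leq qR^\gamma$ by \ref{asmp:pot_large}$_{R_0}$. Choose $\eta\leq\tfrac{1}{2}\wedge\eps/(4R^\gamma)$.

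Now take $z\in s_\nu(t+\eta)$ with $h_\nu(z)>t$ (otherwise $y=z$ works). Fix $\varepsilon'\in(0,\eta]$ and pick an approximating chain $y_0=z,y_1,y_2,\ldots\in\supp\nu^\ssup{1}$ with $|y_i|\to 0$ and $\sum_{i\geq 1}q|y_i-y_{i-1}|/\xi(y_i)\leq t+\eta+\varepsilon'$. Each tail sum bounds $h_\nu(y_k)$, so $h_\nu(y_k)\leq t_0+1$ and therefore $y_k\in B(0,R)$; since the tail also tends to $0$, the index $J:=\min\{k:h_\nu(y_k)\leq t\}$ is finite and $\geq 1$. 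Set $\lambda=\xi(y_J)(t-h_\nu(y_J))/q$; the one-step estimate $h_\nu(y_{J-1})\leq h_\nu(y_J)+q|y_J-y_{J-1}|/\xi(y_J)$, combined with $h_\nu(y_{J-1})>t$, gives $\lambda\leq|y_J-y_{J-1}|$, so the point
\[y':=y_J+\lambda\,\frac{y_{J-1}-y_J}{|y_J-y_{J-1}|}\]
lies on the segment $[y_J,y_{J-1}]$ and satisfies $h_\nu(y')\leq h_\nu(y_J)+q|y'-y_J|/\xi(y_J)=t$, i.e.\ $y'\in s_\nu(t)$.

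For the distance, triangle inequality gives $|y'-z|\leq(|y_J-y_{J-1}|-\lambda)+\sum_{i=1}^{J-1}|y_i-y_{i-1}|$. Using $\xi(y_i)\leq qR^\gamma$ we replace $|y_i-y_{i-1}|$ by $R^\gamma\cdot q|y_i-y_{i-1}|/\xi(y_i)$ for $i<J$, and analogously rewrite
\[|y_J-y_{J-1}|-\lambda=\frac{\xi(y_J)}{q}\Bigl(\frac{q|y_J-y_{J-1}|}{\xi(y_J)}+h_\nu(y_J)-t\Bigr)\leq R^\gamma\Bigl(\frac{q|y_J-y_{J-1}|}{\xi(y_J)}+h_\nu(y_J)-t\Bigr).\]
Combining, $|y'-z|\leq R^\gamma\bigl(\sum_{i=1}^{J}q|y_i-y_{i-1}|/\xi(y_i)+h_\nu(y_J)-t\bigr)$, and the tail estimate $\sum_{i\geq J+1}q|y_i-y_{i-1}|/\xi(y_i)\geq h_\nu(y_J)$ together with the global budget yield $\sum_{i=1}^{J}q|y_i-y_{i-1}|/\xi(y_i)+h_\nu(y_J)\leq t+\eta+\varepsilon'$. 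Hence $|y'-z|\leq R^\gamma(\eta+\varepsilon')\leq 2R^\gamma\eta\leq\eps/2<\eps$, proving the lemma. The subtlety is precisely this coupled accounting: the ``excess-time'' budget $\eta+\varepsilon'$ is split unpredictably among the prefix sum, the $J$th segment, and the value of $h_\nu(y_J)$, and only by applying the speed bound uniformly on the entire prefix and cancelling $h_\nu(y_J)$ against the tail can one convert the whole budget into a single uniform displacement bound of order $R^\gamma\eta$.
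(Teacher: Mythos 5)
Your proof is correct and takes essentially the same route as the paper's: confine the near-optimal chain to a ball $B(0,R)$ via Lemma~\ref{le:large_hitting} and $h_\nu\geq h_\nu^\delta$, locate the index where the hitting time first drops below $t$, interpolate linearly along one segment to produce a point of $s_\nu(t)$, and convert the leftover time budget into a displacement of order $R^\gamma\eta$ using~\ref{asmp:pot_large}$_{R_0}$. The only cosmetic difference is that the paper tracks the crossing via the tail sums of the chain while you use the hitting times $h_\nu(y_k)$ themselves (and both versions implicitly treat $t>0$ when selecting the crossing index, the $t=0$ case being immediate from the same speed bound).
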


\begin{proof}
By Lemma \ref{le:large_hitting}, together with the fact that $h_\nu(z)\geq h_\nu^\delta(z)$ for all $z$, we can choose $R\geq R_0$ such that $h_\nu(y)>t_0+1$ for all $y\not\in B(0,R)$. Then set $\eta = \frac{\eps}{2R^\gamma}\wedge \frac12$.

Suppose that $z\in s_\nu(t+\eta)\setminus s_\nu(t)$; then $h_\nu(z)\in(t,t+\eta]$, so we can find $y_0=z,y_1,y_2\ldots\to0$ with $h_\nu(z)\leq q\sum_{i=1}^\infty \frac{|y_i-y_{i-1}|}{\xi(y_i)} \leq t+2\eta$. Since $h_\nu(y)>t_0+1$ for all $y\not\in B(0,R)$, we must have $y_1,y_2,\ldots \in B(0,R)$.

Choose $k$ such that $q\sum_{i=k+1}^\infty \frac{|y_i-y_{i-1}|}{\xi(y_i)} \leq t$ and $q\sum_{i=k}^\infty \frac{|y_i-y_{i-1}|}{\xi(y_i)} > t$. Then choose $a\in[0,1)$ such that
\[q\sum_{i=k+1}^\infty \frac{|y_i-y_{i-1}|}{\xi(y_i)} + aq\frac{|y_k-y_{k-1}|}{\xi(y_k)} = t.\]
Setting $\tilde y = y_k + a(y_k-y_{k-1})$, by the above we have $h_\nu(\tilde y)\leq t$, so $\tilde y\in s_\nu(t)$. On the other hand,
\begin{multline*}
q\sum_{i=k+1}^\infty \frac{|y_i-y_{i-1}|}{\xi(y_i)} + aq\frac{|y_k-y_{k-1}|}{\xi(y_k)}\\
= q\sum_{i=1}^\infty \frac{|y_i-y_{i-1}|}{\xi(y_i)} - (1-a)q\frac{|y_k-y_{k-1}|}{\xi(y_k)} - q\sum_{i=1}^{k-1} \frac{|y_i-y_{i-1}|}{\xi(y_i)},
\end{multline*}
so (since the left-hand side equals $t$ and the first sum on the right-hand side is at most $t+2\eta$) we must have
\[(1-a)q\frac{|y_k-y_{k-1}|}{\xi(y_k)} + q\sum_{i=1}^{k-1} \frac{|y_i-y_{i-1}|}{\xi(y_i)} \leq 2\eta.\]
By the triangle inequality, we get
\[|\tilde y-z|=\Big|(1-a)(y_k-y_{k-1}) + \sum_{i=1}^{k-1} (y_i-y_{i-1})\Big| \leq \frac{2\eta}{q}\sup_{y\in B_\nu(0,R)}\xi(y),\]
and by~\ref{asmp:pot_large}$_{R_0}$ and the fact that $\eta \leq \eps/(2R^\gamma)$, we have $|\tilde y-z|\leq \eps$. Since $\tilde y\in s_\nu(t)$ this completes the proof.
\end{proof}

We can now apply Proposition \ref{prop:del_app_general} together with Lemma \ref{le:le61} to prove our main result for this section.

\begin{prop}\label{prop:del_apr_number}
Suppose that $\nu$ satisfies~\ref{asmp:pot_large}$_{R_0}$ and~\ref{asmp:small_pts}$_{r_0}$. For any $\eps > 0$ and $t_0 > 0$, there exists $\delta>0$ (depending only on $\gamma$, $\eps$, $t_0$, $R_0$ and $r_0$) such that
\[m_\nu(z,t) \leq m_\nu^\delta(z,t)\leq m_\nu(z,t)+\eps \quad \mbox{for all } z \in \R^d,\]
and
\[d_H(s_\nu(t), s_\nu^\delta(t))\leq \eps\]
for all $t\in[0,t_0]$ and $z\in\R^d$.
\end{prop}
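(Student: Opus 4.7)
The inequalities $m_\nu(z,t) \leq m_\nu^\delta(z,t)$ and $s_\nu(t) \subseteq s_\nu^\delta(t)$ are immediate from the bound $h_\nu^\delta \leq h_\nu$ already furnished by Proposition \ref{prop:del_app_general}: for the former, $\xi(y)(t-h_\nu^\delta(y)) \geq \xi(y)(t-h_\nu(y))$ passes through the supremum; for the latter, $h_\nu(z)\leq t$ forces $h_\nu^\delta(z)\leq t$. So the real content is the two reverse estimates.

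For the support, I would first apply Lemma \ref{le:le61} with the given $\eps$ and $t_0$ to extract $\eta \in (0,1)$ (depending only on $\gamma,\eps,t_0,R_0$) such that $s_\nu(t+\eta) \subseteq \bigcup_{y\in s_\nu(t)} B(y,\eps)$ for every $t\leq t_0$. Then invoke Proposition \ref{prop:del_app_general} with the parameter $\eta$ in place of $\eps$, to produce $\delta>0$ with $h_\nu \leq h_\nu^\delta + \eta$ pointwise. For any $t\leq t_0$ and any $z\in s_\nu^\delta(t)$ this gives $h_\nu(z)\leq h_\nu^\delta(z)+\eta \leq t+\eta$, whence $z\in s_\nu(t+\eta)$ and thus lies within $L^1$-distance $\eps$ of some point of $s_\nu(t)$. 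Combined with $s_\nu(t)\subseteq s_\nu^\delta(t)$, this yields $d_H(s_\nu(t),s_\nu^\delta(t))\leq \eps$.

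For the number of particles, write
\[ F^\delta(y,z,t) := \xi(y)(t-h_\nu^\delta(y)) - q|y-z|, \qquad F(y,z,t) := \xi(y)(t-h_\nu(y)) - q|y-z|, \]
so that $F^\delta - F = \xi(y)(h_\nu(y)-h_\nu^\delta(y))$. The naive route is to bound this by $\eps'\xi(y)$ and conclude, but $\xi$ is unbounded on $\supp\nu^\ssup{1}$; the heart of the argument is to restrict the supremum to a bounded region. I would fix $R_1 \geq R_0$ with $R_1^{1-\gamma} > t_0$, and only consider $\delta$ small enough that $qR_1/\delta > t_0$. Then Lemma \ref{le:large_hitting} gives $h_\nu^\delta(y)>t_0 \geq t$ for every $|y|>R_1$, so $F^\delta(y,z,t)<0$ on $\{|y|>R_1\}$ and the supremum defining $\tilde m_\nu^\delta(z,t) := \sup_y F^\delta(y,z,t)$ is unchanged upon restriction to $y\in \cB_\nu(0,R_1)$. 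On this set, \ref{asmp:pot_large}$_{R_0}$ yields $\xi(y)\leq qR_1^\gamma$. Now apply Proposition \ref{prop:del_app_general} with the parameter $\eps/(qR_1^\gamma)$ (shrinking $\delta$ further if necessary) to obtain $h_\nu - h_\nu^\delta \leq \eps/(qR_1^\gamma)$ pointwise, so that $F^\delta - F \leq \eps$ uniformly on $\cB_\nu(0,R_1)$. Using $\sup_A F^\delta \leq \sup_A F + \sup_A(F^\delta - F)$ and then taking $(\cdot)\vee 0$, I get $m_\nu^\delta(z,t) \leq m_\nu(z,t) + \eps$ uniformly in $z\in\R^d$ and $t\in[0,t_0]$.

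The main obstacle, as indicated, is the unboundedness of $\xi$ over the point process; the key ingredient that overcomes it is the quantitative lower bound on $h_\nu^\delta$ from Lemma \ref{le:large_hitting}, which confines the effective range of the supremum to a ball of radius $R_1$ depending only on $\gamma,t_0,R_0$ (and \emph{not} on $z$ or on $\delta$, provided $\delta$ is small enough). Everything else is bookkeeping to choose a single $\delta$ satisfying all the smallness constraints simultaneously.
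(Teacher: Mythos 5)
Your proposal is correct and follows essentially the same route as the paper: the support bound via Lemma \ref{le:le61} combined with Proposition \ref{prop:del_app_general} applied at level $\eta$, and the particle-number bound via Lemma \ref{le:large_hitting} to confine the supremum to a ball $\cB(0,R_1)$ where \ref{asmp:pot_large}$_{R_0}$ bounds $\xi$ by $qR_1^\gamma$, then Proposition \ref{prop:del_app_general} at level $\eps/(qR_1^\gamma)$. Your explicit remark that the restriction and the simultaneous choice of a single $\delta$ rest on the monotonicity of $h_\nu^\delta$ in $\delta$ is exactly the point the paper makes in its closing paragraph, so there is nothing further to add.
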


\begin{proof} We start by showing the statement about the supports, $s_\nu$ and $s_\nu^\delta$. By Lemma \ref{le:le61} we can choose $\eta>0$ such that
\[s_\nu(t+\eta) \subseteq \bigcup_{y \in s_\nu(t)} B(y,\eps) \quad \forall t\leq t_0.\]
Then by Proposition \ref{prop:del_app_general} we can choose $\delta>0$ such that
\[h_\nu^\delta(z) \leq h_\nu(z) \leq h_\nu^\delta(z)+\eta \quad \forall z\in\R^d.\]
We get
\[z\in s_\nu(t) \quad\Rightarrow\quad h_\nu(z)\leq t \quad\Rightarrow\quad h_\nu^\delta(z)\leq t \quad\Rightarrow\quad z\in s_\nu^\delta(t),\]
and
\[z\in s_\nu^\delta(t) \quad\Rightarrow\quad h_\nu^\delta(z)\leq t \quad\Rightarrow\quad h_\nu(z)\leq t+\eta \quad\Rightarrow\quad z\in s_\nu(t+\eta),\]
so
\[s_\nu(t) \subset s_\nu^\delta(t) \subset \bigcup_{y\in s_\nu(t)} B(y,\eps).\]
This implies that $d_H(s_\nu(t),s_\nu^\delta(t))\leq \eps$ as required.

We now turn our attention to the numbers of particles, $m_\nu$ and $m_\nu^\delta$. By Lemma~\ref{le:large_hitting} we can choose $R>R_0$ such that $h_\nu^\delta(z)>t_0$ for all $z\not\in B(0,R)$ and all $\delta\in(0,1]$. Then by Proposition \ref{prop:del_app_general} we can choose $\delta\in(0,1]$ such that $h_\nu^\delta(z)\leq h_\nu(z)\leq h_\nu^\delta(z)+\eps/(qR^\gamma)$ for all $z\in\R^d$. Then, straight from the definitions, we have
\[m_\nu(z,t)\leq m_\nu^\delta(z,t) \leq m_\nu(z,t) + \sup_{y\in B_\nu(0,R)}\xi(y)\frac{\eps}{qR^\gamma}\]
for all $z\in\R^d$ and $t\leq t_0$. By~\ref{asmp:pot_large}$_{R_0}$ the right-hand side is at most $m_\nu(z,t)+\eps$.

Finally, since $h_\nu^\delta(z) \leq h_\nu(z)$ for all $z \in \R^d$ and $h_\nu^\delta$ is increasing as $\delta \downarrow 0$, the 
event $\{ h_\nu^\delta(z) \leq h(z) \leq h_\nu^\delta(z) + \eps \}$, and therefore the events $\{m_\nu(z,t) \leq m_\nu^\delta(z,t)\leq m_\nu(z,t)+\eps\}$ and $\{s_\nu(t) \subset s_\nu^\delta(t) \subset \bigcup_{y\in s_\nu(t)} B(y,\eps)\}$, are increasing as $\delta\downarrow 0$ for any $\eps > 0$. In particular, we can choose the same $\delta$ for both the support and the number of particles.
\end{proof}

\subsection{The $\delta$-approximation works}\label{ssn:delta_works}

Our aim in this section is to show that the $\delta$-approximations converge (in a suitable sense) as $\delta\downarrow0$ to the quantities they are supposed to approximate. In particular we will prove Proposition \ref{prop:appr_works}. We first show that conditions \ref{asmp:pot_large} and \ref{asmp:small_pts} hold with high probability for both $\Pi_T$ and $\Pi$.

\begin{lem}\label{le:conditions_satisfied}
As $R_0\to\infty$,
\[\P\big( \Pi \hbox{ satisfies \ref{asmp:pot_large}$_{R_0}$}) \to 1 \quad\mbox{ and }\quad \liminf_{T\to\infty}\P\Big( \Pi_T \hbox{ satisfies \ref{asmp:pot_large}$_{R_0}$}\Big) \to 1,\]
and as $r_0\to0$,
\[\P\big( \Pi \hbox{ satisfies \ref{asmp:small_pts}$_{r_0}$}) \to 1 \quad\mbox{ and }\quad \liminf_{T\to\infty}\P\Big( \Pi_T \hbox{ satisfies \ref{asmp:small_pts}$_{r_0}$}\Big) \to 1.\]
\end{lem}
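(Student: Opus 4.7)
My plan is a dyadic union bound in all four statements. The key input will be the algebraic identity $r(T)^d = a(T)^\alpha$, which follows from $q=d/(\alpha-d)$ and the definitions of $a(T), r(T)$; this identity ensures that the expected number of lattice sites $z\in\Z^d$ with $z/r(T)\in B(0,R)$ and $\xi(z)/a(T) \geq x$ agrees up to a constant with the Poisson expectation $C R^d x^{-\alpha}$. The same estimate therefore serves both $\Pi_T$ and $\Pi$, so the two halves of the lemma can be treated in parallel.

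For \ref{asmp:pot_large}$_{R_0}$, I would cover $[R_0,\infty)$ by dyadic annuli $R\in[2^k, 2^{k+1}]$. Failure on such an annulus forces a point of $\nu^\ssup{1}$ to lie in $B(0, 2^{k+1})$ with potential exceeding $q 2^{k\gamma}$; by the above, the expected number of such points is of order $2^{(k+1)d} \cdot 2^{-k\alpha\gamma} \asymp 2^{k(d-\alpha)/2}$. Markov's inequality converts this into a failure-probability bound, and summing the geometric series over $k\geq\log_2 R_0$ gives a total bound of order $R_0^{(d-\alpha)/2}$, which tends to zero as $R_0\to\infty$ (using $\alpha>d$), uniformly in $T$ once $qR_0^\gamma a(T)\geq1$.

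For \ref{asmp:small_pts}$_{r_0}$, I would first reindex by $u = r 2^{-k}$ to show the condition is equivalent to: for every $u\in(0,r_0]$, some point of $\nu^\ssup{1}$ lies in $B(0,u)$ with $\xi\geq u^\gamma$. Covering $(0, r_0]$ by dyadic shells $[2^{-(k+1)}, 2^{-k}]$, failure inside a shell is implied by the absence of points in $B(0, 2^{-(k+1)})\times[2^{-k\gamma},\infty)$, whose expected count is of order $2^{k(\alpha-d)/2}$ and diverges. For $\Pi$ the failure probability is thus at most $\exp(-C 2^{k(\alpha-d)/2})$; for $\Pi_T$ the independence of the $\xi(z)$ together with $(1-p)^n\leq e^{-np}$ gives the same bound. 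Summing over $k\geq \log_2(1/r_0)$ produces a failure probability tending to zero as $r_0\downarrow 0$.

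The main obstacle, and the only place where $\Pi_T$ deviates qualitatively from $\Pi$, is the treatment of sub-lattice scales $u < 1/r(T)$ in \ref{asmp:small_pts}, where the only candidate atom in $B(0,u)$ is the origin itself. Here I would use the deterministic bound $\xi_T(0)\geq 1/a(T)$ combined with the estimate $1/r(T) = o(a(T)^{-1/\gamma})$ (which reduces algebraically to $\alpha>d$) to conclude that, for $T$ large enough, any $u\leq 1/r(T)$ automatically satisfies $u^\gamma\leq \xi_T(0)$, so the condition is met deterministically at the origin. The dyadic argument above then only needs to be run on the range $u\in[1/r(T), r_0]$, and its bound carries through uniformly in $T$.
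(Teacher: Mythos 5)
Your proof is correct and follows essentially the same dyadic-scale strategy as the paper: first-moment/union bounds over scales $2^k$ for \ref{asmp:pot_large} and exponential ``no point above threshold'' bounds over scales $2^{-k}$ for \ref{asmp:small_pts}, with summability coming from $d-\alpha\gamma=(d-\alpha)/2<0$ and $\alpha\gamma-d=(\alpha-d)/2>0$. The only difference is that the paper imports the per-scale estimates for $\Pi_T$ from \cite[Lemma 2.7]{OR14} (which in particular absorbs the sub-lattice-scale regime that you handle by hand via $\xi_T(0)\geq 1/a(T)$ and $r(T)^{\gamma}\geq a(T)$), whereas you re-derive them directly from the Pareto tails and the identity $r(T)^d=a(T)^{\alpha}$.
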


\begin{proof} 
Define the event $A_k(\nu) = \{ \max_{z \in A_\nu(0,2^{k}) } \xi(z) \leq q  2^{(k-1) \gamma}\}$. 
By~\cite[Lemma 2.7(ii)]{OR14}, there exists a constant $C$ such that for any $T>e$ and any $k\geq 0$,
\[ \p (A_k(\Pi_T)^c) 
\leq C 2^{dk}  (q2^{\gamma (k-1) } )^{-\alpha} =  C 2^{\alpha\gamma} q^{-\alpha}  2^{(d-\gamma\alpha)k}. \]
Similarly, by direct calculation, there exists a constant $C$ such that for any $R\geq 1$,
\[ \p (A_k(\Pi)^c) 
\leq 1-e^{-C  2^{\alpha\gamma} q^{-\alpha} 2^{(d-\gamma\alpha) k }} \leq C 2^{\alpha\gamma} q^{-\alpha} 2^{(d-\gamma\alpha)k}.\]
Note that $d-\gamma\alpha<0$, so that in  both cases the probabilities are summable over $k$.
In particular, we can choose $K$ large enough so that the event $\cap_{k \geq K } A_k(\nu)$ holds 
with probability arbitrarily close to $1$ (for $\nu = \Pi$ or for $\nu = \Pi_T$ and uniformly in $T > e$).

Now on the event $\cap_{k \geq K } A_k(\nu)$, we can take any $R \geq 2^{K}$ and choose $k$ such that $2^k \leq R \leq 2^{k+1}$. Then, we have
that
\[ \sup_{z \in B(0,R)} \xi(z) \leq \sup_{z \in B(0,2^{k+1})} \xi(z) \leq q  2^{k\gamma} \leq q R^\gamma ,\]
so that the first statement follows.

To show~\ref{asmp:small_pts}$_{r_0}$, we define $\tilde A_k(\nu) = \{ \exists z \in B_\nu(0,2^{-k}) \, : \, \xi(z) \geq 2^{-\gamma (k-1)}  \}$. 
For $\nu = \Pi_T$, we have from~\cite[Lemma~2.7(i)]{OR14} that there exists $c > 0$ such that for $T > e$,
\[\p ( \tilde A_k(\Pi_T)^c )  = \p \Big( \max_{y  \in \supp \Pi_T^\ssup{1} \cap B(0,2^{-k})}\xi(y) \leq  2^{-\gamma (k-1)} \Big) \leq e^{-c 2^{-\alpha \gamma} 2^{k(\alpha \gamma - d)}  }.\]
Similarly, by direct calculation, there exists a constant $c>0$ such that
\[\p ( \tilde A_k(\Pi)^c ) = \p \Big( \max_{y  \in \supp \Pi^\ssup{1} \cap B(0,2^{-k})}\xi(y) \leq 2^{-\gamma (k-1)} \Big) \leq e^{-c 2^{-\alpha \gamma} 2^{k(\alpha \gamma - d)}  }.\]
Note that $\alpha \gamma - d>0$, so for any $\eps>0$ we can choose $K$ such that for all large $T$,
\[ \P\Big(\bigcup_{k\geq K} \tilde A_k(\Pi_T)^c \Big) \leq \eps \quad\mbox{ and }\quad \P\Big(\bigcup_{k\geq K} \tilde A_k(\Pi)^c \Big) \leq \eps.\]
The result follows.
\end{proof}

We also note the following easy lemma.

\begin{lem}\label{le:lilypad_nontrival}
Almost surely, $h_\Pi(z) \in (0,\infty)$ and $h_{\Pi_T}(z)\in(0,\infty)$ for any $z \not= 0$.
\end{lem}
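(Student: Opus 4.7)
The plan is to deduce this lemma directly from Corollary~\ref{cor:nontriv} combined with Lemma~\ref{le:conditions_satisfied}. Recall that Corollary~\ref{cor:nontriv} asserts that whenever a point measure $\nu$ satisfies~\ref{asmp:pot_large}$_{R_0}$ and~\ref{asmp:small_pts}$_{r_0}$ for some $R_0$ and $r_0$, we have $0 < h_\nu(z) < \infty$ for every $z \neq 0$. Thus it suffices to check that almost surely each of $\Pi$ and $\Pi_T$ satisfies both conditions for some (random) values of the parameters.

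For $\Pi$, the event
\[ E_n := \{\Pi \text{ satisfies~\ref{asmp:pot_large}$_n$ and~\ref{asmp:small_pts}$_{1/n}$}\} \]
is increasing in $n$, since both assumptions become weaker as $R_0$ grows and as $r_0$ shrinks. Lemma~\ref{le:conditions_satisfied} gives $\P(E_n) \to 1$ as $n\to\infty$, so $\P(\bigcup_n E_n) = 1$, and on this event the hypotheses of Corollary~\ref{cor:nontriv} hold for some random $R_0, r_0$.

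For $\Pi_T$ (with $T > e$ fixed), the bounds $\P(A_k(\Pi_T)^c) \leq C\, 2^{(d-\alpha\gamma)k}$ and $\P(\tilde A_k(\Pi_T)^c) \leq e^{-c\, 2^{(\alpha\gamma-d)k}}$ derived in the proof of Lemma~\ref{le:conditions_satisfied} are summable in $k$, using that $\alpha\gamma > d$. By Borel--Cantelli, for each fixed $T$ almost surely only finitely many of the events $A_k(\Pi_T)^c$ and $\tilde A_k(\Pi_T)^c$ occur. As already observed in the proof of Lemma~\ref{le:conditions_satisfied}, this translates into $\Pi_T$ satisfying~\ref{asmp:pot_large}$_{R_0}$ and~\ref{asmp:small_pts}$_{r_0}$ for some random $R_0$ and $r_0$, and Corollary~\ref{cor:nontriv} again gives the conclusion.

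There is no real technical obstacle here; the only care needed is in upgrading the ``probability tending to $1$'' statements of Lemma~\ref{le:conditions_satisfied} to almost-sure statements, which is achieved by monotonicity in the Poisson case and by Borel--Cantelli applied to the already-summable tail bounds in the discrete case.
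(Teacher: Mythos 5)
Your proof is correct and follows essentially the same route as the paper, whose proof is the one-line observation that the statement follows by combining Corollary~\ref{cor:nontriv} with Lemma~\ref{le:conditions_satisfied}. The extra care you take in upgrading the ``probability tending to $1$'' statements to almost-sure ones (monotonicity of the events in $R_0$, $r_0$ for $\Pi$, and Borel--Cantelli over $k$ from the summable bounds for $\Pi_T$) is a valid filling-in of a step the paper leaves implicit, not a different approach.
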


\begin{proof}
The statement follows by combining Corollary~\ref{cor:nontriv} with Lemma~\ref{le:conditions_satisfied}.
\end{proof}

The next corollary is the key tool in proving Proposition \ref{prop:appr_works}.

\begin{cor}\label{cor:approx_works} For any $\eps > 0$, $T>e$ and $t_0 > 0$,
\begin{eqnarray}
&\lim_{\delta \downarrow 0 }  \p \Big( \sup_{z \in \R^d} | h_\Pi(z)  - h_\Pi^\delta(z) | \geq \eps \Big)  =  0, \\
&\lim_{\delta \downarrow 0 }  \p \Big( \sup_{t \leq t_0} \sup_{z \in \R^d} | m_\Pi(z,t)  - m_\Pi^\delta(z,t) | \geq \eps \Big)  =  0,\\
&\lim_{\delta \downarrow 0 }  \p \Big( \sup_{t \leq t_0} d_{H} (s_\Pi(t) , s_\Pi^\delta(t) ) \geq \eps \Big)  =  0,
 \end{eqnarray}
 and similarly
\begin{eqnarray}
&\lim_{\delta \downarrow 0 } \limsup_{T\to\infty} \p \Big( \sup_{z \in \R^d} | h_{\Pi_T}(z)  - h_{\Pi_T}^\delta(z) | \geq \eps \Big) = 0 , \\
&\lim_{\delta \downarrow 0 } \limsup_{T\to\infty} \p \Big( \sup_{t \leq t_0} \sup_{z \in \R^d} | m_{\Pi_T}(z,t)  - m_{\Pi_T}^\delta(z,t) | \geq \eps \Big)  =  0,\\
&\lim_{\delta \downarrow 0 } \limsup_{T\to\infty} \p \Big( \sup_{t \leq t_0} d_{H} (s_{\Pi_T}(t) , s_{\Pi_T}^\delta(t) ) \geq \eps \Big)  =  0.
 \end{eqnarray}
\end{cor}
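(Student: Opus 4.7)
The plan is to observe that this corollary is essentially the probabilistic incarnation of the deterministic Propositions~\ref{prop:del_app_general} and~\ref{prop:del_apr_number}, turned into a statement in probability by averaging over the point process using Lemma~\ref{le:conditions_satisfied}. So the argument will be short: condition on the event that \ref{asmp:pot_large}$_{R_0}$ and \ref{asmp:small_pts}$_{r_0}$ hold, and apply the deterministic bounds.

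In more detail, fix $\eps>0$ and $t_0>0$, and let $\eta>0$ be arbitrary. By Lemma~\ref{le:conditions_satisfied} I can choose $R_0$ large and $r_0$ small so that the event
\[ G := \{ \Pi \text{ satisfies both } \text{\ref{asmp:pot_large}}_{R_0} \text{ and } \text{\ref{asmp:small_pts}}_{r_0} \}\]
has $\P(G)\geq 1-\eta$, and likewise the event $G_T$ defined with $\Pi_T$ in place of $\Pi$ satisfies $\liminf_{T\to\infty}\P(G_T)\geq 1-\eta$. With $R_0$ and $r_0$ fixed, Propositions~\ref{prop:del_app_general} and~\ref{prop:del_apr_number} produce a threshold $\delta_0=\delta_0(\gamma,\eps,t_0,R_0,r_0)>0$ such that, on the event $G$, for every $\delta\in(0,\delta_0)$ simultaneously
\[ \sup_{z\in\R^d}|h_\Pi(z)-h_\Pi^\delta(z)|\leq\eps,\qquad \sup_{t\leq t_0}\sup_{z\in\R^d}|m_\Pi(z,t)-m_\Pi^\delta(z,t)|\leq\eps,\qquad \sup_{t\leq t_0}d_H(s_\Pi(t),s_\Pi^\delta(t))\leq\eps.\]
The identical bounds hold on $G_T$ with $\Pi$ replaced by $\Pi_T$ (the deterministic propositions are stated for any $\nu$ satisfying the two conditions). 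Consequently each of the six probabilities in the statement is bounded above by $\eta$ for all sufficiently small $\delta$ (in the $\Pi_T$ case, after also taking $T$ large enough along the $\liminf$). Letting $\eta\downarrow 0$ proves the corollary.

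The only mildly delicate point is the order in which the parameters are chosen: the threshold $\delta_0$ furnished by Propositions~\ref{prop:del_app_general} and~\ref{prop:del_apr_number} depends on $R_0$ and $r_0$, so these must be fixed \emph{first} (using Lemma~\ref{le:conditions_satisfied} to make the bad-environment probability at most $\eta$) and only then can $\delta$ be sent to $0$. In the $\Pi_T$ case one must take the $\limsup_{T\to\infty}$ before the limit in $\delta$, which is exactly the order in which the statement is written, so there is no circularity. No genuine obstacle arises here; the real content of the argument was already absorbed into the deterministic Propositions~\ref{prop:del_app_general} and~\ref{prop:del_apr_number} and the tail estimates in Lemma~\ref{le:conditions_satisfied}.
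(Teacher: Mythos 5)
Your proposal is correct and follows essentially the same route as the paper: combine Lemma~\ref{le:conditions_satisfied} (the conditions \ref{asmp:pot_large}$_{R_0}$, \ref{asmp:small_pts}$_{r_0}$ hold with probability at least $1-\eta$, uniformly in large $T$) with the deterministic Propositions~\ref{prop:del_app_general} and~\ref{prop:del_apr_number}, choosing $R_0,r_0$ before sending $\delta\downarrow0$. The only cosmetic difference is that the paper justifies "good for all $\delta\in(0,\delta_0)$" explicitly via the monotonicity of $h_\nu^\delta$ as $\delta\downarrow0$ (so the good events increase as $\delta$ decreases), a point you assert implicitly but which is already recorded at the end of the proof of Proposition~\ref{prop:del_apr_number}.
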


\begin{proof}
First, since $h_\Pi^\delta(z)\leq h_\Pi(z)$ for all $z\in\R^d$ and $\delta>0$, and $h_\Pi^\delta(z)$ is increasing as $\delta\downarrow0$, the events $\{h_\Pi^\delta(z)\leq h(z)\leq h_\Pi^\delta(z)+\eps\}$ are increasing as $\delta\downarrow0$. By Lemma \ref{le:conditions_satisfied} and Proposition \ref{prop:del_app_general}, we know that for any $\eps>0$,
\[\lim_{\delta\downarrow 0} \P( h_\Pi^\delta(z)\leq h_\Pi(z)\leq h_\Pi^\delta(z)+\eps \quad \forall z\in\R^d) = 1\]
and
\[\lim_{\delta\downarrow 0} \liminf_{T\to\infty}\P( h_{\Pi_T}^\delta(z)\leq h_{\Pi_T}(z)\leq h_{\Pi_T}^\delta(z)+\eps \quad \forall z\in\R^d) = 1;\]
the first and fourth statements follow. The proofs of the statements for $m$ and $s$ are almost identical, using Proposition \ref{prop:del_apr_number} in place of Proposition \ref{prop:del_app_general}.
\end{proof}

From Corollary~\ref{cor:approx_works}, we can easily deduce our main technical result Proposition~\ref{prop:appr_works}.

\begin{proof}[Proof of Proposition~\ref{prop:appr_works}]
We consider first the case of the hitting times. Recall that we defined, for any $f , g\in \mathcal{C}^d := C(\R^d,[0,\infty))$,
 \[ d_{U}(f,g) = \sum_{k \geq 1} 2^{-k} \Big( \sup_{x \in [-k,k]^d} \big\{|f(x) - g(x)|\big\} \wedge 1 \Big). \]
For any $\eps > 0$, we choose $N$ such that $2^{-N} \leq \eps/2$. Then we have
\[ \begin{aligned} \p \big( d_{U} (h^\delta_T, h_T) \geq \eps \big) 
& \leq \p \Big( \sum_{k=1}^N  \sup_{z \in [-k,k]^d} |h_T^\delta(z) - h_T(z)|  \geq \eps/2 \Big) \\
& \leq \sum_{k=1}^N \p\Big( \sup_{z \in [-k,k]^d} |h_T^\delta(z) - h_T(z)|  \geq \eps/(2N) \Big)
\end{aligned} 
\]
Letting first $T \ra \infty$ and then $\delta \downarrow 0$, we obtain by Corollary~\ref{cor:approx_works} that
\[ \lim_{\delta \downarrow 0} \limsup_{T \ra\infty}  \p \big( d_{U} (h^\delta_T, h_T) \geq \eps \big)  = 0 . \]
The argument for the numbers of particles and the support of the discrete lilypad model as well as the analogous statements
for the Poisson lilypad model also follow from Corollary~\ref{cor:approx_works} in exactly the same way.
If we combine these statements, we obtain Proposition~\ref{prop:appr_works}.
\end{proof}

\subsection{Proof of Theorem~\ref{thm:weak_conv}}\label{ssn:proof_scaling}

We would like to apply the continuous mapping theorem to deduce the weak convergence of the $\delta$-truncated lilypad models. To facilitate this application, we introduce some slightly different $\delta$-approximations: define, for $z\in\R^d$ and $\delta>0$,
\[ \tilde h^\delta_\nu(z)  = \inf \Big\{ \sum_{j=1}^n q \frac{|y_{j-1} - y_j|}{\xi(y_j)} + q\frac{|y_n|}{\delta}  \, : \,n\in\N_0, y_0 = z \mbox{ and }y_1, \ldots, y_{n} \in \cB_\nu(0,1/\delta) \Big\}.\]
Note that the only difference from our previous definition $h^\delta_\nu$ is that the points $y_1\ldots,y_n$ must now be within the closed ball $\cB(0,1/\delta)$. We also define
\[ \tilde m_\nu^\delta(z,t) = \sup_{y \in \cB_\nu(0,1/\delta)} \Big\{ \xi(y) ( t - \tilde h_\nu^\delta(y) ) - q |y-z| \Big\} \vee 0 , \quad z \in \R^d, t \geq 0, \]
and
\[ \tilde s_\nu^\delta(t) = \{ z \in \R^d \,  : \, \tilde h_\nu^\delta(z) \leq t \} , \quad  t \geq 0 . \]

We recall that $h_T^\delta$ is shorthand for $h_{\Pi_T}^\delta$, $h^\delta$ for $h_\Pi^\delta$, and so on; and we similarly write $\tilde h_T^\delta$ for $\tilde h_{\Pi_T}^\delta$, $\tilde h^\delta$ for $\tilde h_\Pi^\delta$ and so on.

The benefit of introducing these new quantities is that applying the continuous mapping theorem to them is straightforward.

\begin{prop}\label{prop:d_apr_conv} For any $\delta > 0$,  as $T \ra \infty$
\[ (\tilde h_T^\delta, \tilde m_T^\delta, \tilde s_T^\delta) \Rightarrow (\tilde h^\delta, \tilde m^\delta, \tilde s^\delta) . \]
\end{prop}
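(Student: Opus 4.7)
The plan is to apply the continuous mapping theorem to the vague convergence $\Pi_T \Rightarrow \Pi$ recalled in Section~\ref{ssn:point_pr}: it suffices to show that the map $\nu \mapsto (\tilde h^\delta_\nu, \tilde m^\delta_\nu, \tilde s^\delta_\nu)$ is $\P$-almost surely continuous at $\nu = \Pi$, from $M_p(E)$ with the vague topology into $\cC^{\ssup{\times 3}}$ with the product topology.

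The crux of the argument is a structural reduction: the infimum defining $\tilde h^\delta_\nu(z)$ may be taken only over paths $y_0 = z, y_1, \dots, y_n$ whose intermediate points all satisfy $\xi(y_j) > \delta$, and hence lie in the relatively compact set $U := \cB(0, 1/\delta) \times (\delta, \infty]$. To see this, let $k$ be the largest index with $\xi(y_k) \leq \delta$. If $k < n$, then by maximality $\xi(y_{k+1}) > \delta \geq \xi(y_k)$, and a triangle-inequality computation shows that deleting $y_k$ changes the cost by at most $q|y_{k-1}-y_k|(1/\xi(y_{k+1}) - 1/\xi(y_k)) \leq 0$; if $k = n$, deleting $y_n$ changes the cost by at most $q|y_{n-1}-y_n|(1/\delta - 1/\xi(y_n)) \leq 0$. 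Iterating removes all offending points without increasing the cost.

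Once this reduction is in place, continuity at $\Pi$ follows. Almost surely $\Pi(\partial U) = 0$ by absolute continuity of the Poisson intensity, so vague convergence $\nu \to \Pi$ forces the finitely many atoms of $\nu$ in $U$ to be in one-to-one correspondence with those of $\Pi$ in $U$, with positions and heights converging. Then $\tilde h^\delta_\Pi(z)$ is the minimum, taken over the trivial $n = 0$ option and finitely many paths through the atoms in $U$, of a continuous function of the atom data, so pointwise continuity in $\nu$ follows. The same reduction provides a uniform Lipschitz bound $|\tilde h^\delta_\nu(z) - \tilde h^\delta_\nu(z')| \leq q|z-z'|/\delta$ (any first hop in a near-optimal path has $\xi(y_1) > \delta$), which upgrades pointwise continuity to uniform continuity on compacts, i.e.\ continuity in $(\cC^d, d_U)$.

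The arguments for $\tilde m^\delta$ and $\tilde s^\delta$ follow similar lines. For $\tilde s^\delta$, an a priori bound of the form $\tilde s^\delta_\nu(t) \subset \cB(0, t \xi_{\max}/q)$, where $\xi_{\max}$ is the supremum of $\xi$ over $\cB_\nu(0, 1/\delta)$, confines both sides to a common compact set for $\nu$ near $\Pi$, and Hausdorff convergence then follows from uniform convergence of the hitting times. For $\tilde m^\delta$, I would introduce an auxiliary $\eps$-truncation $\tilde m^{\delta, \eps}_\nu$ restricting the supremum to $y$ with $\xi(y) > \eps$: since any $y$ with $\xi(y) \leq \eps$ contributes at most $\xi(y) t \leq \eps t_0$, one has $|\tilde m^\delta_\nu - \tilde m^{\delta, \eps}_\nu| \leq \eps t_0$ uniformly, while $\tilde m^{\delta, \eps}_\nu$ depends continuously on the finitely many atoms with $\xi > \eps$; letting $\eps \to 0$ then yields $d_P$-convergence. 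The main obstacle throughout is isolating the finite-dimensional character of the problem at each fixed $\delta$, which is precisely what the $\xi > \delta$ reduction in paragraph two accomplishes; once done, vague convergence on the relatively compact set $U$ does the rest.
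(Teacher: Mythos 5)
Your proposal is correct and takes essentially the same route as the paper: apply the continuous mapping theorem to $\Pi_T \Rightarrow \Pi$ after observing that the $\delta$-truncated functionals depend only on the finitely many atoms in the compact window $\cB(0,1/\delta)\times[\delta,\infty]$, whose positions and heights converge under vague convergence (the paper cites Resnick's Proposition 3.31 for this). Your explicit path-reduction eliminating points with $\xi\leq\delta$, the $\eps$-truncation for $\tilde m^\delta$, the Lipschitz bound, and the $\P(\Pi(\partial U)=0)=1$ remark are exactly the details the paper leaves implicit, so the argument is the same, just spelled out more carefully.
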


\begin{proof} As discussed in Section~\ref{ssn:point_pr},
we know that 
\[ \Pi_T \Rightarrow \Pi . \]
 By the continuous mapping theorem, \cite[Theorem 5.1]{Bil68}, 
 we only have to show that
each of the maps
\[ \nu \mapsto \tilde h_\nu^\delta, \quad \nu \mapsto \tilde m_\nu^\delta, \quad \nu \mapsto \tilde s^\delta_\nu, \]
are continuous as functions from $M_p(E)$ (equipped with the vague topology) into the target spaces
equipped with the topologies described before Theorem~\ref{thm:weak_conv}.

We note that the definitions of $\tilde h^\delta_\nu$, $\tilde m^\delta_\nu$, and $\tilde s^\delta_\nu$ only depend on the point process through 
the values in $\cB(0,1/\delta) \times [\delta , \infty)$, which is a compact set in $E$. 
The same is true for $\tilde m^\delta_\nu$ and $\tilde s^\delta_\nu$.
Therefore, we can use Proposition 3.31 in~\cite{Res08}:
given that $\nu_n$ converges vaguely to $\nu$,  we can label 
atoms of $\nu_n$ and $\nu$ restricted to any compact set such that the finitely many atoms converge pointwise. 
This implies in particular that $\tilde h_{\nu_n}^\delta \ra \tilde h_\nu^\delta$, $\tilde m_{\nu_n}^\delta \ra \tilde m_\nu^\delta$, and $\tilde s_{\nu_n}^\delta \ra \tilde s_\nu^\delta$.
\end{proof}

Write
\[ \Triple_T  = (H_T, M_T, S_T),\quad \triple_T = (h_T, m_T, s_T), \quad \triple_T^\delta = (h_T^\delta, m_T^\delta, s_T^\delta), \quad \ttriple_T^\delta = (\tilde h_T^\delta, \tilde m_T^\delta, \tilde s_T^\delta), \]
\[\ttriple^\delta = (\tilde h^\delta,\tilde m^\delta, \tilde s^\delta), \quad \triple^\delta  = (h^\delta ,m^\delta, s^\delta) , \quad \triple = (h,m,s) .\]
We now need to check that $\ttriple^\delta_T$ is close to $\triple^\delta_T$, and $\ttriple^\delta$ is close to $\triple^\delta$.

\begin{lem}\label{le:tildegood}
For any $\eps>0$,
\[\lim_{\delta\downarrow0} \limsup_{T\to\infty}\P( d^\ssup{\times 3}(\ttriple^\delta_T, \triple^\delta_T)>\eps) = 0 \quad\mbox{ and }\quad \lim_{\delta\downarrow0} \P( d^\ssup{\times 3}(\ttriple^\delta, \triple^\delta)>\eps) = 0.\]
\end{lem}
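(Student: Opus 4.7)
\textbf{Proof plan for Lemma~\ref{le:tildegood}.}
The only difference between the tilde and non-tilde versions is that the intermediate points in $\tilde h^\delta$ and the supremising points in $\tilde m^\delta, \tilde s^\delta$ are restricted to the closed ball $\cB_\nu(0,1/\delta)$. My plan is to show that, on a high-probability event on which~\ref{asmp:pot_large}$_{R_0}$ and~\ref{asmp:small_pts}$_{r_0}$ both hold, all the optimal (or $\eps'$-near-optimal) paths for $h^\delta$ and all the supremising points for $m^\delta$ already lie inside a ball $B(0,R^*)$ whose radius $R^*$ is \emph{independent of $\delta$}. As soon as $\delta \leq 1/R^*$ this ball sits inside $\cB(0,1/\delta)$, the restriction becomes vacuous, and the tilde and non-tilde versions agree pointwise. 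Lemma~\ref{le:conditions_satisfied} then allows me to make the good event have probability as close to $1$ as I like, uniformly in large $T$ in the $\Pi_T$ setting.

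For the hitting times on a fixed window $[-k,k]^d$, I would use Lemma~\ref{le:small_times} (applied via~\ref{asmp:small_pts}$_{r_0}$) to bound $h^\delta(z)\leq h(z)\leq C(r_0,k)$ uniformly in $\delta$ and in $z\in[-k,k]^d$, then repeat the subpath argument from the proof of Corollary~\ref{cor:restrict}: if any $\eps'$-near-optimal path for $h^\delta(z)$ passes through some $y_j\notin B(0,R^*)$ with $R^*:=\max\{R_0,(2C(r_0,k))^{1/(1-\gamma)}\}$, then the tail of that path from $y_j$ onward is a valid candidate for $h^\delta(y_j)$, so by Lemma~\ref{le:large_hitting} its cost is at least $\min\{(R^*)^{1-\gamma},qR^*/\delta\}$; the first branch already exceeds $2C(r_0,k)\geq 2h^\delta(z)$ regardless of $\delta$, and the second only grows as $\delta\downarrow 0$, so for small enough $\delta$ this contradicts near-optimality. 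Hence every near-optimal path lies inside $B(0,R^*)\subseteq\cB(0,1/\delta)$ once $\delta\leq 1/R^*$, which forces $\tilde h^\delta(z)=h^\delta(z)$ on $[-k,k]^d$.

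For $m^\delta$ and $s^\delta$ on $[0,t_0]$ I would argue in parallel: any $y\in\supp\nu^\ssup{1}$ contributing positively to $m^\delta(z,t)$ for $t\leq t_0$ must satisfy $h^\delta(y)\leq t_0$, which by Lemma~\ref{le:large_hitting} confines $y$ to a $\delta$-independent ball $B(0,R^*_{t_0})$; likewise $s^\delta(t)\subseteq B(0,R^*_{t_0})$ for $t\leq t_0$. Applying the previous paragraph to the bounded region $B(0,R^*_{t_0})$ (where Lemma~\ref{le:small_times} still yields a $\delta$-free bound on $h$) gives $\tilde h^\delta(y)=h^\delta(y)$ for all relevant $y$, and substituting into the definitions of $\tilde m^\delta$ and $\tilde s^\delta$ then forces $\tilde m^\delta=m^\delta$ and $\tilde s^\delta=s^\delta$ on $\R^d\times[0,t_0]$ and $[0,t_0]$, respectively.

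To package these into the distance estimate, I would truncate each of $d_U$, $d_P$ and $d_F$ in its geometric-weight expansion to finitely many terms, leaving a tail of size $<\eps/2$, then apply the pointwise equalities above on each of the finitely many compact regions so that the truncated distance vanishes on the good event for $\delta$ below an explicit threshold. Sending $R_0\uparrow\infty$ and $r_0\downarrow 0$ via Lemma~\ref{le:conditions_satisfied} would drive the probability of the good event to $1$ (uniformly in large $T$), yielding both the $\Pi$ and $\Pi_T$ halves of the lemma. The main point to watch is that $R^*$ really is independent of $\delta$, which works because the $R^{1-\gamma}$ branch of Lemma~\ref{le:large_hitting} depends only on $R_0$, $r_0$ and the truncation parameter ($k$ or $t_0$), while the $qR/\delta$ branch only helps as $\delta\downarrow 0$.
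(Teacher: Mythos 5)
Your proposal is correct and follows essentially the same route as the paper: a good event from Lemma~\ref{le:conditions_satisfied} on which \ref{asmp:pot_large}$_{R_0}$ and \ref{asmp:small_pts}$_{r_0}$ hold, then Lemmas~\ref{le:small_times} and~\ref{le:large_hitting} to show that for small $\delta$ all relevant points and paths lie inside $\cB(0,1/\delta)$, so the tilde and non-tilde quantities coincide on the compact windows controlling the metrics. The paper phrases this as $\inf_{y\notin\cB(0,1/\delta)}h^\delta_\nu(y)>\max\{\sup_{z\in B(0,R)}h^\delta_\nu(z),\,t_0\}$ for $\delta$ small, while you fix a $\delta$-independent radius $R^*$ and take $\delta\leq 1/R^*$, but this is only a difference in bookkeeping, not in substance.
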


\begin{proof}
Fix $\eta>0$; by Lemma \ref{le:conditions_satisfied} we may choose $R_0, r_0>0$ such that both $\Pi_T$ (for any large $T$) and $\Pi$ satisfy \ref{asmp:pot_large}$_{R_0}$ and \ref{asmp:small_pts}$_{r_0}$ with probability at least $1-\eta$.

By Lemmas \ref{le:small_times} and \ref{le:large_hitting}, for any point measure $\nu$ satisfying \ref{asmp:pot_large}$_{R_0}$ and \ref{asmp:small_pts}$_{r_0}$, and any $R>0$ and $t_0>0$, there exists $\delta_0>0$ such that for all $\delta\in(0,\delta_0)$,
\[\inf_{y\not\in \cB(0,1/\delta)} h^\delta_\nu (y) > \max\Big\{ \sup_{z\in B(0,R)} h^\delta_\nu(z), t_0\Big\}.\]
Then for all $\delta\in(0,\delta_0)$, $z\in B(0,R)$ and $t\leq t_0$, we have
\[\tilde h^\delta_\nu(z) = h^\delta_\nu(z), \quad \tilde m^\delta_\nu(z,t) = m^\delta_\nu(z,t) \quad \mbox{and}\quad \tilde s^\delta_\nu(t) = s^\delta_\nu(t).\]
From the definition of $d^\ssup{\times 3}$ (choosing $R$ and $t_0$ large enough that the distance is guaranteed to be small) we get that for all large $T$,
\[\p\Big( d^\ssup{\times 3}(\ttriple^\delta_T, \triple^\delta_T)>\eps\Big) \leq\eta \quad\mbox{ and }\quad \p\Big( d^\ssup{\times 3}(\ttriple^\delta, \triple^\delta)>\eps\Big)\leq \eta\]
for all $\delta\in(0,\delta_0)$. Since $\eta>0$ was arbitrary, this completes the proof.
\end{proof}

We can now combine the various parts of this section to deduce the main scaling limit, Theorem~\ref{thm:weak_conv}.

\begin{proof}[Proof of Theorem~\ref{thm:weak_conv}]
By the portmanteau theorem it suffices to show that for any
bounded and Lipschitz-continuous function $f : \cC^\ssup{\times 3} \ra \R$, we
have that
\begin{equation}\label{eq:wk_conv_lip} \E [ f(H_T, M_T, S_T) ] \ra \E [ f ( h, m , s) ] \quad \mbox{as } T \ra \infty. \end{equation}  
Suppose that $f : \cC^3 \ra \R$ is bounded by $\|f\|$ and Lipschitz continuous with Lipschitz constant $L$, and let $\eps > 0$.
We have that 
\[\begin{aligned}  \big| \E [ f(\Triple_T) ] - \E [f (\triple) ] \big|
& \leq  \E \big[ | f(\Triple_T) - f(\triple_T) | \big] 
 + 
\E \big[ | f(\triple_T) - f (\triple_T^\delta) |\big] + \E\big[|f(\triple_T^\delta)-f(\ttriple_T^\delta)|\big]
\\
&\quad + \big| \E [ f(\ttriple_T^\delta )] - \E [ f(\ttriple^\delta) ] \big| 
+ \E\big[ | f(\ttriple^\delta ) - f(\triple^\delta) | \big] + \E\big[|f(\triple^\delta)-f(\triple)|\big] \\
& \leq 5 L \eps + 2\| f \| \, \p ( d^\ssup{\times 3}( \Triple_T, \triple_T) > \eps ) + 2\| f \| \, \p ( d^\ssup{\times 3}( \triple_T, \triple_T^\delta) > \eps ) \\
& \qquad + 2\|f\|\,\p(d^\ssup{\times 3}(\triple^\delta_T,\ttriple^\delta_T)>\eps) +
\big| \E [ f(\ttriple_T^\delta )] - \E [ f(\ttriple^\delta) ] \big| \\
&\qquad + 2\|f\|\,\p(d^\ssup{\times 3}(\ttriple^\delta,\triple^\delta)>\eps) + 2\| f \|\, \p ( d^\ssup{\times 3}(\triple^\delta, \triple) > \eps ). \\
\end{aligned} \]
We now take a $\limsup$ as $T\to\infty$: by Theorem \ref{thm:discrete_lilypad},
\[\p ( d^\ssup{\times 3}( \Triple_T, \triple_T) > \eps ) \to 0;\]
and by Proposition \ref{prop:d_apr_conv},
\[\big| \E [ f(\ttriple_T^\delta )] - \E [ f(\ttriple^\delta) ] \big|\to 0.\]
Thus
\begin{multline*}
\limsup_{T\to\infty} \big| \E [ f(\Triple_T) ] - \E [f (\triple) ] \big|\\
\leq 5 L \eps + 2\| f \| \, \limsup_{T\to\infty}\p ( d^\ssup{\times 3}( \triple_T, \triple_T^\delta) > \eps ) + 2\|f\|\,\limsup_{T\to\infty}\p(d^\ssup{\times 3}(\triple^\delta_T,\ttriple^\delta_T)>\eps) \\
\qquad + 2\|f\|\,\p(d^\ssup{\times 3}(\ttriple^\delta,\triple^\delta)>\eps) + 2\| f \|\, \p ( d^\ssup{\times 3}(\triple^\delta, \triple) > \eps ).
\end{multline*}
Finally, by Proposition \ref{prop:appr_works} and Lemma \ref{le:tildegood}, taking a limit as $\delta\downarrow0$ on the right-hand side, we get
\[\limsup_{T\to\infty} \big| \E [ f(\Triple_T) ] - \E [f (\triple) ] \big| \leq 5 L \eps,\]
and since $\eps>0$ was arbitrary the proof is complete.
\end{proof}

\section{Proof of the ageing result}\label{sec:proof_ageing}

In this section we prove Theorem~\ref{thm:ageing}. 

Before we start with the main proof, we need to collect several auxiliary lemmas, 
where we show that the lilypad models are rather `discrete':  once
two maximizing points are close, they are in fact the same.

\begin{lemma}\label{le:compact_max} For any $t > 0$
\[ \lim_{n \ra \infty}  \limsup_{T \ra \infty } \p ( S_T(\cdot,t) \not\subseteq B(0,n) )   =  0 . \]
\end{lemma}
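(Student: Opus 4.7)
The plan is to combine the scaling limit Theorem~\ref{thm:weak_conv} with a tail bound on the support of the Poisson lilypad model $s(t)$, which we obtain from the lower bound on hitting times in Lemma~\ref{le:large_hitting}.

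First, the time-$t$ evaluation map $\pi_t : \cC_F \to F(\R^d)$, $S \mapsto S(t)$, is continuous (immediate from the definition of $d_F$). Combined with Theorem~\ref{thm:weak_conv} and the continuous mapping theorem, this yields $S_T(t) \Rightarrow s(t)$ in $(F(\R^d), d_H)$.

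Next, I would check that $\cA_n := \{K \in F(\R^d) : K \not\subseteq B(0,n)\}$ is closed in the Hausdorff metric. Indeed, if $K_m \to K$ in $d_H$ with $K_m \in \cA_n$, pick $x_m \in K_m$ with $|x_m| \geq n$; Hausdorff convergence forces the $K_m$ into a bounded neighbourhood of the compact set $K$ for large $m$, so $\{x_m\}$ is bounded and any subsequential limit $x$ lies in $K$ with $|x| \geq n$, whence $K \in \cA_n$. The portmanteau theorem then yields
\[\limsup_{T \to \infty} \p(S_T(t) \not\subseteq B(0,n)) \leq \p(s(t) \not\subseteq B(0,n)).\]

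Finally, I would show the right-hand side vanishes as $n \to \infty$. By Lemma~\ref{le:conditions_satisfied}, $\Pi$ satisfies~\ref{asmp:pot_large}$_{R_0}$ with probability arbitrarily close to $1$ provided $R_0$ is large. On this event, since $h \geq h^\delta$ for every $\delta > 0$, Lemma~\ref{le:large_hitting} and sending $\delta \downarrow 0$ give $h(y) \geq R^{1-\gamma}$ for every $y \notin B(0,R)$ with $R \geq R_0$. Consequently $s(t) \subseteq B(0,n)$ as soon as $n \geq R_0$ and $n^{1-\gamma} > t$, so $\p(s(t) \not\subseteq B(0,n)) \to 0$ as $n \to \infty$, completing the proof. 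The only delicate step is the closedness of $\cA_n$ in the Hausdorff metric, which is a standard property of Hausdorff limits of compact sets; everything else is a direct combination of results already established in the paper.
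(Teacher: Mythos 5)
Your proof is correct, but it follows a different route from the paper's. The paper stays entirely at the pre-limit level: it applies Lemma~\ref{le:large_hitting} together with Lemma~\ref{le:conditions_satisfied} to the rescaled point process $\Pi_T$ to conclude that the discrete lilypad support $s_T(t)$ stays inside $B(0,n)$ with probability tending to $1$ (uniformly in large $T$) as $n\to\infty$, and then transfers this to $S_T(t)$ via the in-probability approximation of Theorem~\ref{thm:discrete_lilypad}, i.e.\ $\sup_{t\le t_\infty} d_H(S_T(t),s_T(t))\to 0$. You instead invoke the already-established scaling limit Theorem~\ref{thm:weak_conv}, push it through the (continuous) evaluation map, use the portmanteau theorem with the closed set $\{K\in F(\R^d): K\not\subseteq B(0,n)\}$, and then bound the support of the limit object $s(t)$ by applying the same pair of lemmas to $\Pi$ (letting $\delta\downarrow 0$ in Lemma~\ref{le:large_hitting}, which is legitimate since $h\ge h^\delta$ and $\gamma<1$). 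There is no circularity, since Lemma~\ref{le:compact_max} is only used in the ageing argument and not in the proof of Theorem~\ref{thm:weak_conv}, and your closedness check in the Hausdorff metric is sound. What the paper's route buys is economy: it needs only the cruder approximation result of Theorem~\ref{thm:discrete_lilypad} rather than the full weak convergence machinery, and avoids the topological bookkeeping (closedness of the bad set, continuity of evaluation, and the tacit fact that $S_T(t)$ is a non-empty compact set so that it lives in $F(\R^d)$). What your route buys is a cleaner conceptual statement—containment is read off directly from the limit object—at the cost of invoking the heaviest theorem in the paper; both ultimately rest on the same tail estimate, applied to $\Pi_T$ in the paper and to $\Pi$ in your version.
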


\begin{proof}
Follows for $s_T$ instead of $S_T$ by combining Lemma~\ref{le:large_hitting} with Lemma~\ref{le:conditions_satisfied}
and thus for $S_T$ by Theorem~\ref{thm:discrete_lilypad}.
\end{proof}

\begin{lemma}\label{le:Poisson_simple}
We have:
\begin{itemize}
	\item[(i)] For any $t > 0$, $\lim_{n \ra \infty} \p (\supp m(\cdot, t) \not\subseteq B(0,n)) = 0 )$. 
	\item[(ii)] For any $t > 0$, $ \lim_{\eps \downarrow 0 } \p (\xi(w(t)) \leq \eps ) = 0$.
	\item[(iii)] For any $n \in \N, \eps > 0$,
	\[ \lim_{\delta \downarrow 0}  \p ( \exists z_1 \neq z_2 \in B_\Pi(0,n) \, : \, |z_1 - z_2|< \delta \mbox{ and } \xi(z_1) \geq \eps , \xi(z_2) \geq \eps )  = 0 . \]
\end{itemize}
\end{lemma}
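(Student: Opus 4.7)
My plan is to observe that the restriction of $\Pi$ to the relatively compact set $B(0,n)\times[\eps,\infty)\subset E$ has finite total intensity $|B(0,n)|\eps^{-\alpha}$, so almost surely contains only finitely many atoms with pairwise distinct spatial coordinates. The events in question are monotone decreasing in $\delta$, with intersection contained in the null event that two such atoms coincide spatially, so continuity of probability from above concludes.

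\textbf{Part (i).} I will fix $\eta>0$ and use Lemma~\ref{le:conditions_satisfied} to pick $R_0$ so that \ref{asmp:pot_large}$_{R_0}$ holds with probability at least $1-\eta$; on this event, since $h\ge h^\delta$ for every $\delta>0$, Lemma~\ref{le:large_hitting} (taken with $\delta\le qR^\gamma$ so that the minimum equals $R^{1-\gamma}$) gives $h(y)\ge R^{1-\gamma}$ for all $y\notin B(0,R)$ and $R\ge R_0$, so any $y$ with $h(y)\le t$ lies in $\cB(0,R^*)$ for $R^*:=\max\{R_0,(t+1)^{1/(1-\gamma)}\}$, and on that ball \ref{asmp:pot_large}$_{R_0}$ also bounds $\xi(y)\le q(R^*)^\gamma$. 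Since $m(z,t)>0$ requires some such $y$ with $|y-z|\le\xi(y)(t-h(y))/q\le (R^*)^\gamma t$, I will conclude $|z|\le R^*+(R^*)^\gamma t=:n_0$; letting $n\to\infty$ and then $\eta\downarrow 0$ gives the claim.

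\textbf{Part (ii).} My plan rests on the identity
\[
\max_{z\in\R^d} m(z,t)\;=\;\sup_{y\in\supp\Pi^\ssup{1}}\xi(y)(t-h(y)),
\]
which follows because each map $z\mapsto \xi(y)(t-h(y))-q|y-z|$ is maximized at $z=y$; a short argument decomposing the supremum into the contribution of an atom $y^*$ achieving the sup and the contribution of the remaining atoms then shows that any maximizer of $m(\cdot,t)$ must coincide with such a $y^*$, so almost surely $w(t)$ is an atom of $\Pi$ and $m(w(t),t)=\xi(w(t))(t-h(w(t)))$. For the lower bound I will fix $\eta>0$, use Lemma~\ref{le:conditions_satisfied} to pick $r_0$ small enough that \ref{asmp:small_pts}$_{r_0}$ holds with probability at least $1-\eta$ \emph{and} $4qr_0^{1-\gamma}/(1-2^{\gamma-1})<t/2$, and take the point $Z_0\in B_\Pi(0,r_0)$ supplied by \ref{asmp:small_pts}$_{r_0}$ (with $r=r_0,k=0$), so that $\xi(Z_0)\ge r_0^\gamma$. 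Lemma~\ref{le:small_times} gives $h(Z_0)<t/2$, so $m(w(t),t)\ge\xi(Z_0)(t-h(Z_0))\ge r_0^\gamma t/2$, and dividing by $t-h(w(t))\le t$ yields $\xi(w(t))\ge r_0^\gamma/2$ on this event. Setting $\eps_0:=r_0^\gamma/2$ gives $\p(\xi(w(t))\le\eps_0)\le\eta$, and since $\eta$ was arbitrary the claim follows.

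The only slightly delicate step is the justification in (ii) that $w(t)$ is almost surely an atom of $\Pi$, so that $\xi(w(t))$ is meaningful; the rest is direct bookkeeping on Lemmas~\ref{le:large_hitting}, \ref{le:small_times} and \ref{le:conditions_satisfied}.
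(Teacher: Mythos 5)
Your proposal is correct, and parts (ii) and (iii) take genuinely different routes from the paper, while part (i) is exactly the paper's argument (combine Lemma~\ref{le:large_hitting}, via $h\geq h^\delta$, with Lemma~\ref{le:conditions_satisfied}) spelled out in detail. For (iii), the paper conditions on an atom via Palm calculus, so that the remaining configuration is again Poisson, writes the probability as an integral against the intensity $\pi$ of the probability that a second point falls in $B(z_1,\delta)\times[\eps,\infty)$, and concludes by dominated convergence; your argument instead uses only that $\Pi$ a.s.\ has finitely many atoms in $B(0,n)\times[\eps,\infty)$ with pairwise distinct spatial coordinates, plus monotonicity in $\delta$ and continuity from above. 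Yours is more elementary; the paper's Palm computation would additionally yield a quantitative rate in $\delta$, which is not needed here. For (ii), the paper argues by monotone convergence that the limit equals $\p(\xi(w(t))=0)=\p(m(\cdot,t)\equiv 0)$ and invokes the a.s.\ nontriviality of the Poisson lilypad model (Lemma~\ref{le:lilypad_nontrival}); you instead produce an explicit threshold $\eps_0=r_0^\gamma/2$ on the high-probability event \ref{asmp:small_pts}$_{r_0}$, using Lemma~\ref{le:small_times} and the identity $m(w(t),t)=\xi(w(t))(t-h(w(t)))$. This is a valid and more quantitative route, but as you note it leans on the fact that $w(t)$ is an atom attaining $\sup_{y\in\supp\Pi^\ssup{1}}\xi(y)(t-h(y))$; to make your sketch complete you should justify attainment of that supremum, e.g.\ by noting that on the event \ref{asmp:pot_large}$_{R_0}$ all atoms with $h(y)<t$ lie in a fixed compact ball (your part (i) argument), that any atom contributing more than half the (positive) supremum must have $\xi(y)$ bounded below, and that there are a.s.\ only finitely many such atoms, so the supremum is a maximum over a finite set and any maximizer of $m(\cdot,t)$ coincides with one of these atoms. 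This is the same fact the paper itself uses implicitly (``by construction the maximizer $w(t)$ is in $\supp\Pi^\ssup{1}$'' in the proof of Lemma~\ref{le:Poisson_max_unique}), so it is a detail to fill in rather than a genuine gap.
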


\begin{proof}
(i) Follows by combining Lemma~\ref{le:large_hitting} with Lemma~\ref{le:conditions_satisfied}.

(ii) By monotone convergence
\[  \lim_{\eps \downarrow 0 } \p (\xi(w(t)) \leq \eps ) = \p (\xi(w(t)) = 0 )  = \p ( m(x,t) = 0 \mbox{ for all } x) . \]
But by Lemma~\ref{le:lilypad_nontrival} we know that the lilypad model is almost surely non-trivial, so the latter probability is $0$.

(iii) By the standard Palm calculus for Poisson processes we know that, conditionally on $\Pi(\{ (x,y) \}) = 1$, the process
$\Pi - \delta_{(x,y)}$ is again a Poisson process with intensity $\pi$, see e.g.~\cite[Theorem 3.1]{Baddeley}. 
Therefore, we can write
\[ \begin{aligned} 	\p ( \exists z_1 & \neq z_2 \in B(0,n) \, : \, z_2 \in B(z_1, \delta)  \mbox{ and } \xi(z_1) \geq \eps , \xi(z_2) \geq \eps ) \\
& = \int_{B(0,n) \times [\eps, \infty)} \p( \exists z_2 \in B(z_1, \delta)\setminus \{z_1\}  \, : \, \xi(z) \geq \eps ) 
\pi( d (x_1,y_2) )  \\
& = \int_{B(0,n) \times [\eps, \infty)} \p ( \Pi( B(z_1, \delta) ) \times [\eps, \infty) \neq 0 ) ) 
 \pi( d (x_1,y_2) )  \\
\end{aligned} \] 
However, we know that
\[ \p ( \Pi( B(z_1, \delta) ) \times [\eps, \infty) \neq 0 )
= 1 - e^{ - \pi( B(z_1, \delta) ) \times [\eps, \infty) } \ra 0 , \]
as $\delta \downarrow 0$. The claim follows by dominated convergence, since $\pi( B(0,n) \times [\eps, \infty)) < \infty$.
\end{proof}

\begin{lem}\label{le:close_not_equal} For any $0 \leq s < t$,
\[\lim_{\delta \downarrow 0}  \limsup_{T \ra \infty}  \p ( | W_T(t) - W_T(s)| < \delta ; W_T(t) \neq W_T(s) ) = 0 \]
and
\[\lim_{\delta \downarrow 0} \p ( | w(t) - w(s)| < \delta ; w(t) \neq w(s) ) = 0. \]
\end{lem}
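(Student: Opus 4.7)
My plan is to handle the two statements in parallel. The Poisson case uses the three parts of Lemma~\ref{le:Poisson_simple} and Lemma~\ref{le:lilypad_nontrival} directly; the $W_T$ case transfers this argument to the pre-limit system via the discrete lilypad approximation.

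For the Poisson statement, fix $\eta>0$. By Lemma~\ref{le:lilypad_nontrival} the maximizers $w(s), w(t)$ lie in $\supp\Pi^\ssup{1}$. By Lemma~\ref{le:Poisson_simple}(i) and (ii), one can pick $n$ and $\eps>0$ such that the event
\[ G := \{ w(s), w(t) \in B_\Pi(0,n) \} \cap \{ \xi(w(s)) \wedge \xi(w(t)) \geq \eps \} \]
has probability at least $1 - 2\eta$. On $G$, the occurrence of $\{|w(t)-w(s)|<\delta,\, w(t)\neq w(s)\}$ implies that $\Pi$ contains two distinct points of $B(0,n) \times [\eps,\infty)$ within distance $\delta$, which by Lemma~\ref{le:Poisson_simple}(iii) has probability tending to $0$ as $\delta \downarrow 0$. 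Sending $\eta \downarrow 0$ concludes the Poisson statement.

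For $W_T$ the same scheme requires analogues of the three ingredients for the rescaled point process $\Pi_T$: (a) localization to $B(0,n)$ with high probability, which is Lemma~\ref{le:compact_max}; (b) that $\xi_T(W_T(s))$ and $\xi_T(W_T(t))$ are bounded below in probability uniformly in $T$; (c) sparsity of high-potential points in $\Pi_T$, obtainable either by a direct Palm-type calculation on the i.i.d.\ Pareto field or by combining $\Pi_T \Rightarrow \Pi$ with Lemma~\ref{le:Poisson_simple}(iii) via the portmanteau theorem. Once (a)--(c) are in place, the same argument as in the Poisson case carries through verbatim.

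The principal obstacle is (b). To address it, I would introduce the discrete lilypad maximizer $w_T(t) := \argmax_{z\in L_T} m_T(z,t)$. The supremum in the definition of $m_T(w_T(t),t)$ must be attained at $y=w_T(t)$ itself, since any other optimizer $y^*$ would give $m_T(y^*,t) \geq \xi_T(y^*)(t-h_T(y^*)) > m_T(w_T(t),t)$, contradicting the choice of $w_T(t)$. Hence
\[ m_T(w_T(t),t) = \xi_T(w_T(t))\bigl(t - h_T(w_T(t))\bigr). \]
Combining this with the weak convergence $m_T \Rightarrow m$ and the non-triviality of $m$ yields $\xi_T(w_T(t)) \geq \eps$ with high probability. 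By Theorem~\ref{thm:discrete_lilypad}, $W_T(t)$ is a near-maximizer of $m_T(\cdot,t)$; since $m_T(\cdot,t)$ decays with slope $q$ away from its peak at $w_T(t)$, any near-maximizer lies within $o(1)$ of $w_T(t)$, and one would then use sparsity of high-potential points in $\Pi_T$ (essentially step (c) again) to argue that $W_T(t) = w_T(t)$ with high probability, so that $\xi_T(W_T(t)) = \xi_T(w_T(t)) \geq \eps$. The delicate point is this final gap/sparsity step: the approximation error in Theorem~\ref{thm:discrete_lilypad} is $o(1)$, while the $m_T$-gap between neighbouring lattice sites of $L_T$ is only $q/r(T)$, so one has to argue carefully that the $M_T$-argmax cannot drift to a nearby lattice site of generic (low) potential.
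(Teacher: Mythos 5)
Your Poisson argument is exactly the paper's (spatial localisation via Lemma~\ref{le:Poisson_simple}(i), a potential lower bound at the maximizer via (ii), then the sparsity statement (iii)), and your outline for $W_T$ also follows the paper's structure: reduce to the discrete lilypad maximizer $w_T(t)$, get $\xi_T(w_T(t))\geq\eps$ with high probability from $m_T(w_T(t),t)=\xi_T(w_T(t))(t-h_T(w_T(t)))$ together with $m_T\Rightarrow m$ and non-triviality of the limit, localise in space by Lemma~\ref{le:compact_max}, and finish with the explicit Pareto computation $\p(\exists w\neq w'\in L_T(0,n):|w-w'|\leq\delta,\ \xi_T(w),\xi_T(w')\geq\eps)\leq Cn^d\delta^d\eps^{-2\alpha}$.

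However, the step you yourself flag as delicate --- passing from $w_T(t)$ to the actual maximizer $W_T(t)$ of the particle numbers --- is a genuine gap, and it cannot be closed by the route you sketch. Theorem~\ref{thm:discrete_lilypad} only gives $\sup_z|M_T(z,t)-m_T(z,t)|=o(1)$ in probability, while the $m_T$-gap between $w_T(t)$ and a neighbouring lattice site of generic (low) potential is of order $1/r(T)$, vastly smaller than any $o(1)$ error; so nothing in this paper prevents the argmax of $M_T(\cdot,t)$ from sitting on such a low-potential neighbour, in which case $\xi_T(W_T(t))\geq\eps$ fails and the sparsity bound is of no use (indeed the bad event $\{|W_T(t)-W_T(s)|<\delta,\ W_T(t)\neq W_T(s)\}$ could then be realised by two sites at distance $1/r(T)$, only one of which has large potential). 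The paper resolves this not by a gap/sparsity argument but by invoking Theorem~1.1 of the companion paper \cite{onepoint}: with probability tending to $1$ the branching random walk is localised at the single site $w_T(t)$, hence $W_T(t)=w_T(t)$ with high probability, after which the rest of your argument goes through. Without importing that one-point localisation result (or proving an equivalent statement, which is substantially harder than anything in the present paper), your proof of the first limit is incomplete. A minor additional remark: in the Poisson case the fact that $w(t)\in\supp\Pi^\ssup{1}$ is by construction of $m$, not a consequence of Lemma~\ref{le:lilypad_nontrival}, which only gives non-triviality of $h$.
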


\begin{proof} We begin with the first statement. From Theorem~1.1 in~\cite{onepoint}, we know  that for any $t$, 
with probability tending to $1$, the branching random walk is localised in 
the maximizer $w_T(t)$ of $m_T(\cdot, t)$. Therefore it suffices to show 
the corresponding statement for $w_T(t)$.

Note that for any $t > 0$
\[ \lim_{\eps \downarrow 0} \limsup_{T \ra \infty} \p ( \xi(w_T(t)) \geq \eps ) \leq \lim_{\eps\downarrow 0}\p ( m(w(t), t) \leq \eps )
= 0, \]
since the limiting model $m(\cdot, t)$ is almost surely non-trivial by Lemma~\ref{le:lilypad_nontrival}.
Also, by  Lemma~\ref{le:compact_max}, we have for any $t$ that
\[ \lim_{n \ra \infty} \limsup_{T \ra \infty} \p ( |w_T(t)| \geq n) = 0 . \]
Now, for fixed $s$ and $t$, under the assumptions that $\xi(w_T(t)) \wedge \xi(w_T(s)) > \eps$
and $|w_T(t)| \vee |w_T(s)| < n$, the event
$\{ | w_T(t) - w_T(s)| < \delta ;\, w_T(t) \neq w_T(s)\}$
implies that there exist $w \neq w' \in  L_T(0,n)$ with $|w - w'| \leq \delta$ such that
$\xi_T(w), \xi_T(w') \geq \eps$. Thus, by the above, we are done if we can show that for any 
$n \in \N, \eps > 0$,
\[ \lim_{\delta \downarrow 0} \limsup_{T\to\infty}\p ( \exists w \neq w' \in L_T(0,n) \, : \, 
|w - w'| \leq \delta \, : \, \xi_T(w), \xi_T(w') \geq \eps ) = 0 . \]
However, this follows from an explicit calculation: for some constant $C$,
\begin{multline*}\p ( \exists w \neq  w' \in L_T(0,n) \, : \, 
|w - w'| \leq \delta \, : \, \xi_T(w), \xi_T(w') \geq \eps ) \\
  \leq C r(T)^{2d} a(T)^{- 2\alpha} n^d \delta^d \eps^{-2\alpha} 
=  C  n^d \delta^d \eps^{-2\alpha} , 
\end{multline*}
and letting $T\to\infty$ and then $\delta \downarrow 0$ completes the proof of the first statement. The second is almost identical, using Lemma \ref{le:Poisson_simple}.
\end{proof}

We now check that the maximizer for the Poisson lilypad model behaves sensibly. For $x\in\R^d$ and $\delta>0$, let $\partial B(x,\delta) = \{z : |z-x|=\delta\}$, the boundary of the ball of radius $\delta$ about $x$.

\begin{lemma}\label{le:Poisson_max_unique}
The following are true:
\begin{itemize}
\item[(i)]
For any $t \geq 0$, almost surely, there is a single maximizer in the Poisson model 
$m(\cdot, t)$. 
\item[(ii)] For any fixed $x\in\R^d$, $\delta > 0$ and $t> 0$, $\Px\big(w(t) \in \partial B(x,\delta)\big) =0$.
\end{itemize}
\end{lemma}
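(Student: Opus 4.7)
\emph{Part (ii).} This is short. First I note that $w(t)$ always lies in $\supp\Pi^\ssup{1}$: for each $y\in\supp\Pi^\ssup{1}$ the map $z\mapsto\xi_\Pi(y)(t-h(y))-q|y-z|$ attains its maximum at $z=y$, so the outer supremum in $\sup_z m(z,t)$ is achieved at the location of some atom of $\Pi$. It therefore suffices to show $\P(\supp\Pi^\ssup{1}\cap\partial B(x,\delta)\ne\emptyset)=0$; decomposing $(0,\infty)=\bigcup_{n\geq 1}[1/(n{+}1),1/n)$, each slab $\partial B(x,\delta)\times[1/(n{+}1),1/n)$ has $\pi$-measure $\mathrm{Leb}(\partial B(x,\delta))\cdot\int_{1/(n+1)}^{1/n}\alpha u^{-\alpha-1}du=0$, and countable additivity finishes the argument.

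\emph{Part (i): set-up.} The same observation shows that uniqueness of $w(t)$ amounts to uniqueness of the maximiser of $y\mapsto\xi_\Pi(y)(t-h(y))$ over $\supp\Pi^\ssup{1}$. Lemma \ref{le:lilypad_nontrival} ensures this maximum is a.s.\ strictly positive, and Lemmas \ref{le:large_hitting} and \ref{le:conditions_satisfied} allow me to restrict attention to the a.s.\ finite set $\{y\in\supp\Pi^\ssup{1}\cap B(0,R):\xi_\Pi(y)\geq\eps\}$ before sending $R\to\infty$, $\eps\downarrow 0$. Within this truncated set I will apply the Slivnyak--Mecke (two-point Palm) formula: the expected number of ordered pairs of distinct atoms $(y_1,\xi_1),(y_2,\xi_2)\in\Pi$ in $B(0,R)\times[\eps,\infty)$ with $\xi_1(t-h(y_1))=\xi_2(t-h(y_2))$ equals
\[\int\!\!\int\P\big(\xi_1(t-h(y_1;\Pi^+))=\xi_2(t-h(y_2;\Pi^+))\big)\,\pi(dy_1,d\xi_1)\,\pi(dy_2,d\xi_2),\]
where $\Pi^+=\Pi+\delta_{(y_1,\xi_1)}+\delta_{(y_2,\xi_2)}$. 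It will then suffice to show this integrand vanishes for Lebesgue-almost every $(y_1,y_2,\xi_1,\xi_2)$.

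\emph{Part (i): the Lebesgue-null analysis.} The decisive observation is that $h(y_1;\Pi^+)$ does \emph{not} depend on $\xi_1$: in the defining infimum the mark $\xi(y_0)=\xi_1$ of the starting point $y_0=y_1$ never appears in a denominator. Fixing $y_1,y_2,\xi_2,\Pi$ and restricting to the relevant case $t>h(y_1)$, the function
\[F(\xi_1):=\xi_1\bigl(t-h(y_1)\bigr)-\xi_2\bigl(t-h(y_2;\xi_1)\bigr)\]
is linear in $\xi_1$ with strictly positive slope $t-h(y_1)$, minus $\xi_2$ times $t-h(y_2;\xi_1)$. Now each path from $y_2$ to $0$ either avoids $y_1$ (contributing a constant in $\xi_1$) or uses $y_1$ exactly once (contributing a term of the form $a_P+b_P/\xi_1$ with $a_P,b_P\geq 0$), so $h(y_2;\xi_1)$ is the infimum of countably many affine functions of $1/\xi_1$, and hence piecewise affine in $1/\xi_1$. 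On each sub-interval where a single path is optimal the equation $F(\xi_1)=0$ reduces to a polynomial of degree at most $2$ in $\xi_1$, so the zero set of $F$ is at most countable, in particular Lebesgue-null; Fubini then finishes the job.

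\emph{Main obstacle.} The one substantive point is extracting the piecewise-affine-in-$1/\xi_1$ structure of $\xi_1\mapsto h(y_2;\xi_1)$ together with the fact that $F$ cannot vanish on an entire sub-interval---the latter being secured by the strictly positive linear slope of the first term. The remaining ingredients (the truncation, the reduction to two-point Palm, and the limits in $R$ and $\eps$) are routine.
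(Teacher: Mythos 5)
Your part (ii) is correct and is in substance the paper's own argument: $w(t)$ is an atom of $\Pi^{\ssup{1}}$ and $\pi(\partial B(x,\delta)\times[\eps,\infty))=0$; whether one exhausts the mark axis by slabs or by letting $\eps\downarrow 0$ is immaterial.

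For part (i) your route (truncation plus the two-point Mecke/Palm formula) is genuinely different from the paper's, but it has a real gap exactly at the step you call decisive. From ``$h(y_2;\xi_1)$ is the infimum of countably many affine functions of $1/\xi_1$'' you conclude ``hence piecewise affine in $1/\xi_1$''. That implication is false for countably infinite families: such an infimum is merely concave in $s=1/\xi_1$, and the infimum of countably many affine functions can coincide with a strictly concave function on a whole interval (take tangent lines at a dense set of points). In this model there is no reduction to a finite family: every admissible path from $y_2$ to $0$ must use infinitely many atoms (the sequence must converge to $0$, through atoms of arbitrarily small mark), and on any interval of $\xi_1$-values infinitely many atoms $z$ — each giving an affine function with slope $q|z-y_1|$ and intercept (optimal cost $y_2\to z$) $+\,h(y_1)$ — may be relevant. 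So there need not be countably many sub-intervals on each of which a single path is optimal, and the asserted countability of the zero set of $F$ is unproven. Note also that countability is not forced by soft structure: $F(\xi_1)=c\xi_1-\xi_2 t+\xi_2 h(y_2;\xi_1)$ is the sum of a strictly increasing linear function and a nonincreasing one, and two concave functions of $s$ can agree on a set of positive measure; ruling out this exact coincidence is precisely the kind of statement the lemma is about, so it cannot be waved through.

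The fix is the ordering device that the paper's proof is built on. Inside your Mecke integrand, split according to whether $h(y_1;\Pi^+)\geq h(y_2;\Pi^+)$ or the reverse. In the first case (and on the relevant event that the tied value is positive, so $t>h(y_1)\geq h(y_2)$), no near-optimal path from $y_2$ passes through $y_1$ — any such path costs a strictly positive amount plus $h(y_1)$, which exceeds $h(y_2)$ — so \emph{both} $h(y_1;\Pi^+)$ and $h(y_2;\Pi^+)$ are independent of $\xi_1$; the tie equation is then affine in $\xi_1$ with slope $t-h(y_1)>0$, hence has a single root and the $\xi_1$-section is Lebesgue-null. The symmetric case is handled by integrating out $\xi_2$ instead. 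The paper implements exactly this idea without Palm calculus: it removes the points of $B(z,\delta)\times(\eps,\infty)$ from $\Pi$, uses independence of the removed and retained processes together with the observation that when $h_\Pi(w)\geq h_\Pi(w')$ neither hitting time depends on $\xi(w)$, and then sums over a $\delta$-grid and removes the truncations via Lemma~\ref{le:Poisson_simple}. Your truncation and limiting steps in $R$, $\eps$ are fine, but as written the core measure-zero claim is not established.
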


\begin{proof}
(i) The basic idea is the following: if both $w$ and $w'$ are maximizers, we have $m(w,t)=m(w',t)$, which means $\xi(w) = \xi(w')(t-h(w'))/(t-h(w))$. Suppose without loss of generality that $h(w)\geq h(w')$. Then from the definition of $h$, if $w\neq w'$, the values of $\xi(w')$, $h(w')$ and $h(w)$ are independent of $\xi(w)$. So the probability that $\xi(w)$ takes on the exact value $\xi(w')(t-h(w'))/(t-h(w))$ is zero.

However, since our point process $\Pi$ has infinitely many atoms, we need to be careful.

Fix for a moment $z\in\R^d$, $\delta>0$ and $\eps>0$, and let $\hat \Pi$ be the point process obtained by taking $\Pi$ and removing all of the points in $B(z,\delta)\times(\eps,\infty)$ and $\tilde \Pi$ be the point process consisting of only those points of $\Pi$ in $B(z,\delta)\times(\eps,\infty)$. Clearly $\hat \Pi$ and $\tilde \Pi$ are independent.

Note from the definition of $h$ that for any $w\in B(z,\delta)$, if $B_{\tilde \Pi}(z,\delta)=\{w\}$, then $h_\Pi(w) = h_{\hat\Pi}(w)$. Furthermore, for any other point $w'\in\R^d$, if additionally $h_\Pi(w)\geq h_\Pi(w')$ then $h_\Pi(w') = h_{\hat\Pi}(w')$. Therefore
\begin{align*}
&\P\big( \exists w\in\supp\tilde\Pi^\ssup{1},\, w'\in\supp\hat\Pi^\ssup{1} \,:\, B_{\tilde \Pi}(z,\delta)=\{w\},\, h_\Pi(w)\geq h_\Pi(w'), \\
&\hspace{70mm} \xi_\Pi(w)(t-h_\Pi(w)) = \xi_\Pi(w')(t-h_\Pi(w'))\big)\\
&\leq \P\big(\exists w\in\supp\tilde\Pi^\ssup{1},\, w'\in\supp\hat\Pi^\ssup{1} \,:\, B_{\tilde \Pi}(z,\delta)=\{w\}, \\
&\hspace{70mm} \xi_{\tilde\Pi}(w)(t-h_{\hat\Pi}(w)) = \xi_{\hat\Pi}(w')(t-h_{\hat\Pi}(w'))\big)\\
&= 0,
\end{align*}
since $\hat\Pi$ and $\tilde\Pi$ are independent. Returning to our usual notation, this tells us that
\begin{multline*}
\P(\exists w\in B_\Pi(z,\delta),\, w'\in\supp \Pi^\ssup{1} \,:\, h(w)>h(w'),\,\xi(w)>\eps,\\
\xi(y)\leq \eps\,\,\forall y\in B_\Pi(z,\delta)\setminus\{w\},\, m(w,t)=m(w',t)) = 0
\end{multline*}
(where no subscript means we are using the point process $\Pi$).

Now, taking a sum over all $z$ such that $z/\delta\in\Z^d\cap B(0,n)$, we deduce that
\begin{multline*}
\P(\exists w, w'\in B_{\Pi}(0,n) \,:\, h(w)>h(w'),\,\xi(w)>\eps,\\
\xi(y)\leq \eps\,\,\forall y\in B_\Pi(w,2\delta)\setminus\{w\},\, m(w,t)=m(w',t)) = 0.
\end{multline*}
Taking a limit as $\delta\downarrow0$, we get by Lemma \ref{le:Poisson_simple} (iii) that
\[\P(\exists w, w'\in B_{\Pi}(0,n) \,:\, h(w)>h(w'),\,\xi(w)>\eps,\, m(w,t)=m(w',t)) = 0.\]
Now taking $n\to\infty$, by Lemma \ref{le:Poisson_simple} (i), we have
\[\P(\exists w, w'\in \supp \Pi^\ssup{1} \,:\, h(w)>h(w'),\,\xi(w)>\eps,\, m(w,t)=m(w',t)>0) = 0.\]
Finally, taking $\eps\downarrow0$, by Lemma \ref{le:Poisson_simple} (ii), we get
\[\P(\exists w, w'\in \supp \Pi^\ssup{1} \,:\, h(w)>h(w'),\, m(w,t)=m(w',t)=\sup_{x\in\Z^d}m(x,t)) = 0.\]
This completes the proof of (i).

(ii) To show the second statement, we note that by construction the maximizer $w(t)$
is in $\supp \Pi^\ssup{1}$. Also, note that the event $\{ \xi(w(t)) \geq \eps \}$ is an increasing event as $\eps \downarrow 0$. 
Thus, we have by Lemma~\ref{le:Poisson_simple} and monotone convergence
\[ \begin{aligned} \p ( w(t) \in \partial B(x,\delta) ) &
= \lim_{\eps \downarrow 0} \p ( w(t) \in \partial B(x,\delta) , \xi(w(t) )\geq \epsilon )\\
& \leq \limsup_{\eps \downarrow 0 }  \p ( w(t) \in \partial B(x,\delta) , \xi(w(t)) \geq \eps ) \\
& \leq \limsup_{\eps \downarrow 0} \p ( \Pi( \partial B(x,\delta) \times [\eps,\infty)) \geq 1 ) 
= 0, 
\end{aligned} 
\] 
since $\pi( \partial B(x,\delta) \times [\eps,\infty)  ) = 0$.
\end{proof}

\begin{lemma}\label{le:Poisson_not_equal} For any $\theta > 0$,
\[ \lim_{\delta \downarrow 0}\P \big(  |w(1) -  w(1+\theta)| \leq 2\delta,\,\, w(1) \neq w(1+\theta)\big)  = 0.\]
\end{lemma}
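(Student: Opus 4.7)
The plan is to observe that this is essentially a specialisation of the Poisson half of Lemma \ref{le:close_not_equal}: since $\{|w(1)-w(1+\theta)| \leq 2\delta\} \subseteq \{|w(1)-w(1+\theta)| < 3\delta\}$, the claim follows directly by applying that lemma with $s=1$, $t=1+\theta$ and the substitution $\delta \mapsto 3\delta$, and then sending $\delta \downarrow 0$.

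For a self-contained argument, the strategy is to combine the three parts of Lemma \ref{le:Poisson_simple}. Given $\eta > 0$, I would first use part (i)---together with the observation that $w(t) \in \supp m(\cdot,t)$---to choose $n$ large enough that $\p(\{w(1), w(1+\theta)\} \not\subseteq B(0,n)) < \eta/3$. Next, I would use part (ii) to choose $\eps > 0$ small enough that $\p(\xi(w(1)) \wedge \xi(w(1+\theta)) \leq \eps) < \eta/3$.

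Then, on the complement of these two bad events, both maximizers lie in $B_\Pi(0,n)$---using the fact, highlighted in the proof of Lemma \ref{le:Poisson_max_unique}(ii), that by construction $w(t) \in \supp \Pi^\ssup{1}$---and both have potential at least $\eps$. Consequently, on this complement the event $\{|w(1) - w(1+\theta)| \leq 2\delta,\ w(1) \neq w(1+\theta)\}$ forces the existence of two distinct points in $B_\Pi(0,n)$ within distance $2\delta$ of each other, both with potential at least $\eps$. By Lemma \ref{le:Poisson_simple}(iii), the probability of this last event tends to $0$ as $\delta \downarrow 0$, so for $\delta$ sufficiently small it is at most $\eta/3$. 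Summing, the total probability is at most $\eta$ for small $\delta$; since $\eta > 0$ was arbitrary, the limit vanishes.

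The main obstacle is essentially nil here: the argument is a routine assembly of the three parts of Lemma \ref{le:Poisson_simple}, and it mirrors the Poisson half of Lemma \ref{le:close_not_equal}. The only structural input needed is that maximizers of the Poisson lilypad model occur at atoms of $\Pi$, which is immediate from the form of $m(\cdot, t)$: for each $y \in \supp \Pi^\ssup{1}$ the function $z \mapsto \xi(y)(t - h(y)) - q|y - z|$ is maximised at $z = y$, and taking a supremum over $y$ places the overall maximizer at an atom.
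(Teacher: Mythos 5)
Your self-contained argument is exactly the paper's proof: split off the events $\{\xi(w(1))\wedge\xi(w(1+\theta))\leq\eps\}$ and $\{\max(|w(1)|,|w(1+\theta)|)\geq n\}$, note that on the complement two distinct close maximizers give two distinct atoms of $\Pi$ in $B(0,n)$ with potential at least $\eps$, and then apply Lemma~\ref{le:Poisson_simple}(iii), (ii), (i) in that order of limits. Your opening observation that the statement also follows from the Poisson half of Lemma~\ref{le:close_not_equal} is likewise correct (the paper only sketches that half, which is presumably why it spells this lemma out separately), so the proposal is fine as it stands.
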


\begin{proof} Let $n \in \N$ and $\eps > 0$. Then 
\[\begin{aligned}  \P (  & |w(1)  -  w(1+\theta)| \leq  2\delta, w(1) \neq w(1+\theta))  \\
& \leq  \P (  |w(1)  -  w(1+\theta)| \leq  2\delta, w(1) \neq w(1+\theta), \xi(w(1)) \geq \eps, \xi(w(1+\theta)) \geq \eps) \\
& \hspace{2cm} + \p (\min \{ \xi (w(1)), \xi(w(1+\theta))\}  \leq \eps) \\
& \leq \p ( \exists z_1 \neq z_2 \in B_\Pi(0,n) \, : \, | z_2 - z_1| \leq 2 \delta ,  \xi(z_1) \geq \eps , \xi(z_2) \geq \eps ) \\
& \hspace{2cm} + \p (\min \{ \xi (w(1)), \xi(w(1+\theta))\}  \leq \eps) 
+ \p ( \max \{ |w(1)|, |w(1+\theta)|\} \geq n ) . 
\end{aligned}
\]
Now, letting $\delta \downarrow 0$, we obtain from Lemma~\ref{le:Poisson_simple}(iii) that
\[ \begin{aligned}   \lim_{\delta \downarrow 0} \P (  & |w(1)  -  w(1+\theta)| \leq  2\delta, w(1) \neq w(1+\theta))   \\
& \leq \p (\min \{ \xi (w(1)), \xi(w(1+\theta))\}  \leq \eps) 
+ \p ( \max \{ |w(1), |w(1+\theta)| \geq n ) . 
\end{aligned}
\] 
Finally, letting $\eps \downarrow 0$ and $n \ra \infty$, we obtain the statement 
from Lemma~\ref{le:Poisson_simple} (i), (ii).
\end{proof}

We are now finally ready to prove the ageing result, Theorem~\ref{thm:ageing}.

\begin{proof}[Proof of Theorem~\ref{thm:ageing}]
We start with a lower bound.
For any $\theta > 0, \delta > 0$, define the open  set
\[\begin{aligned} \cO_\theta^\delta  := \Big\{ f \in \cC^{d+1}_0 \, : \, 
 \exists y \in \R^d \, \hbox{ with } 
 &  \max_{z \in \R^d \setminus B(y,\delta) } f(z,1) < f(y,1),  \\
 & \max_{z \in \R^d \setminus B(y,\delta) } f(z,1+\theta) < f( y,1+\theta) 
 \Big\}  . 
 \end{aligned}
\]
From the weak convergence $M_T \Rightarrow m$, we know that
\begin{equation}\label{eq:Oli}
\liminf_{T \ra \infty} \p ( M_T \in \cO_\theta^\delta ) \geq \p (m \in \cO_\theta^\delta) .
\end{equation}

Note that if $w(1)=w(1+\theta)$, then $m\in\cO_\theta^\delta$ for any $\delta>0$; so
\begin{align*}
\P(w(1) = w(1+\theta)) &= \P(w(1) = w(1+\theta), m\in\cO_\theta^\delta)\\
&= \P(m\in\cO_\theta^\delta)-\P(m\in\cO_\theta^\delta, w(1)\neq w(1+\theta)).
\end{align*}
Note that on the event $\{m\in\cO_\theta^\delta\}$, if there are two different maximizers at times $1$ and $1+\theta$ then they must be within distance $\delta$. Thus by Lemma \ref{le:close_not_equal}, $\lim_{\delta\downarrow0} \P(m\in\cO_\theta^\delta, w(1)\neq w(1+\theta)) = 0$, and therefore
\begin{equation}\label{eq:wo1}
\P(w(1) = w(1+\theta)) = \lim_{\delta\downarrow0} \P(m\in\cO_\theta^\delta).
\end{equation}

Similarly, for any $\delta>0$,
\begin{align*}
\P(W_T(1) = W_T(1+\theta)) &= \P(W_T(1) = W_T(1+\theta), M_T\in\cO_\theta^\delta)\\
&= \P(M_T\in\cO_\theta^\delta)-\P(M_T\in\cO_\theta^\delta, W_T(1)\neq W_T(1+\theta)),
\end{align*}
and by Lemma \ref{le:close_not_equal},
\[\lim_{\delta\downarrow 0} \limsup_{T\to\infty}\P(M_T\in\cO_\theta^\delta, W_T(1)\neq W_T(1+\theta)) = 0\]
since on the event $\{M_T\in\cO_\theta^\delta\}$, if there are two different maximizers at times $1$ and $1+\theta$ then they must be within distance $\delta$. Therefore
\[\liminf_{T\to\infty} \P(W_T(1) = W_T(1+\theta)) = \lim_{\delta\downarrow0}\liminf_{T\to\infty} \P(M_T\in \cO_\theta^\delta).\]
Combining this with \eqref{eq:Oli} and \eqref{eq:wo1}, we get
\[\lim_{\delta\downarrow 0} \liminf_{T\to\infty} \P(W_T(1) = W_T(1+\theta)) \geq \P(w(1) = w(1+\theta)),\]
which is the required lower bound.

We now continue with an upper bound. Recall that $\cB(z,r)$ is the closed ball of radius $r$ about $z$. For $z \in \R^d$, $\delta > 0$ and $\theta > 0$, we consider the set
\[\begin{aligned} C_\theta(z,\delta) := \Big\{ f \in \cC_0^{d+1} &  \, : \, 
   \max_{x\in \cB(z,\delta)} f(x,1) = \max_{x \in \R} f(x,1) , \\
   & \max_{x\in \cB(z,\delta)} f(x,1+\theta) = \max_{x \in \R} f(x,1+\theta) \Big\} .
  \end{aligned}
\]
This set is closed, 
so since $M_T \Rightarrow m$ we know that
\begin{equation}\label{eq:0206-1} \limsup_{T \ra \infty} \p ( M_T \in C_\theta(z,\delta) ) \leq \p (m \in C_\theta(z,\delta) ) . \end{equation}

Now let $n \in \N, \delta > 0$ and take $\Gamma_n^\delta$ to be a collection of points such that
$ \cB(0,n\delta) = \bigcup_{z \in \Gamma_n^\delta}  \cB(z,\delta)$, but the collection $\{B(z,\delta) : z \in \Gamma_n^\delta\}$ is disjoint (recall that we are working with $L^1$-balls so that this is possible). Then
\[\p ( W_T(1) = W_T(1+\theta) ) \leq \sum_{z \in \Gamma_n^\delta} \p ( M_T \in C_\theta(z,\delta)) + \p (W_T(1) \notin  \cB(0,n\delta) ),\]
and combining with \eqref{eq:0206-1} and Lemma \ref{le:compact_max} we get that for any $\delta>0$,
\begin{equation}\label{eq:wc}
\limsup_{T\to\infty} \p ( W_T(1) = W_T(1+\theta) ) \leq \limsup_{n\to\infty}\sum_{z \in \Gamma_n^\delta} \P( m\in C_\theta(z,\delta)).
\end{equation}

On the other hand, since by Lemma \ref{le:Poisson_max_unique} the maximizers for the Poisson lilypad model at times $1$ and $1+\theta$ are almost surely unique and not located on the boundary of any of the balls $B(z,\delta)$ for $z\in \Gamma_n^\delta$, we have
\begin{align*}
\sum_{z\in\Gamma_n^\delta} \P(m\in C_\theta(z,\delta)) &\leq \sum_{z\in\Gamma_n^\delta} \P(|w(1)-z|\leq \delta, |w(1+\theta)-z|\leq \delta)\\
&\leq \P(\exists z\in B(0,n\delta) : |w(1)-z|\leq \delta, |w(1+\theta)-z|\leq \delta).
\end{align*}
But, for any $n$,
\begin{multline*}
\P(\exists z\in B(0,n\delta) : |w(1)-z|\leq \delta, |w(1+\theta)-z|\leq \delta)\\
\leq \P(w(1) = w(1+\theta)) + \P(w(1)\neq w(1+\theta), |w(1)-w(1+\theta)|\leq 2\delta),
\end{multline*}
and by Lemma \ref{le:Poisson_not_equal}, the limit of the latter probability as $\delta\downarrow0$ is zero. Thus
\[\lim_{\delta\downarrow0}\limsup_{n\to\infty}\sum_{z\in\Gamma_n^\delta} \P(m\in C_\theta(z,\delta)) \leq \P(w(1) = w(1+\theta)).\]
Combining this with \eqref{eq:0206-1} and \eqref{eq:wc}, we obtain
\[\limsup_{T\to\infty} \p ( W_T(1) = W_T(1+\theta) ) \leq \P(w(1) =w(1+\theta )), \]
which is the required upper bound and completes the proof.
\end{proof}


\bibliographystyle{alpha}

\begin{thebibliography}{ABMY00}

\bibitem[ABMY00]{ABMY00}
S.~Albeverio, L.~V. Bogachev, S.~A. Molchanov, and E.~B. Yarovaya.
\newblock Annealed moment {L}yapunov exponents for a branching random walk in a
  homogeneous random branching environment.
\newblock {\em Markov Process. Related Fields}, 6(4):473--516, 2000.

\bibitem[Bad07]{Baddeley}
A.~Baddeley.
\newblock Spatial point processes and their applications.
\newblock In {\em Stochastic geometry}, volume 1892 of {\em Lecture Notes in
  Math.}, pages 1--75. Springer, Berlin, 2007.

\bibitem[Bil68]{Bil68}
P.~Billingsley.
\newblock {\em Convergence of probability measures}.
\newblock John Wiley \& Sons, Inc., New York-London-Sydney, 1968.

\bibitem[Bil99]{Bil99}
P.~Billingsley.
\newblock {\em Convergence of probability measures}.
\newblock Wiley Series in Probability and Statistics: Probability and
  Statistics. John Wiley \& Sons Inc., New York, second edition, 1999.
\newblock A Wiley-Interscience Publication.

\bibitem[CP07]{CP07b}
F.~Comets and S.~Popov.
\newblock Shape and local growth for multidimensional branching random walks in
  random environment.
\newblock {\em ALEA Lat. Am. J. Probab. Math. Stat.}, 3:273--299, 2007.

\bibitem[EK86]{EthierKurtz86}
S.~N. Ethier and T.~G. Kurtz.
\newblock {\em Markov processes}.
\newblock Wiley Series in Probability and Mathematical Statistics: Probability
  and Mathematical Statistics. John Wiley \& Sons, Inc., New York, 1986.
\newblock Characterization and convergence.

\bibitem[GKS13]{GKS13}
O.~G{\"u}n, W.~K{\"o}nig, and O.~Sekulovi{\'c}.
\newblock Moment asymptotics for branching random walks in random environment.
\newblock {\em Electron. J. Probab.}, 18:no. 63, 18, 2013.

\bibitem[GM90]{GM90}
J.~G{\"a}rtner and S.~A. Molchanov.
\newblock Parabolic problems for the {A}nderson model. {I}. {I}ntermittency and
  related topics.
\newblock {\em Comm. Math. Phys.}, 132(3):613--655, 1990.

\bibitem[HMS08]{HMS08}
R.~van~der Hofstad, P.~M\"orters, and N.~Sidorova.
\newblock Weak and almost sure limits for the parabolic {A}nderson model with
  heavy-tailed potentials.
\newblock {\em Ann. Appl. Probab.}, 18(6):2450�--2494, 2008.

\bibitem[KLMS09]{KLMS09}
W.~K\"onig, H.~Lacoin, P.~M\"orters, and N.~Sidorova.
\newblock A two cities theorem for the parabolic {A}nderson model.
\newblock {\em Ann. Probab.}, 37:347�--392, 2009.

\bibitem[MOS11]{MOS11}
P.~M{\"o}rters, M.~Ortgiese, and N.~Sidorova.
\newblock Ageing in the parabolic {A}nderson model.
\newblock {\em Ann. Inst. Henri Poincar\'e Probab. Stat.}, 47(4):969--1000,
  2011.

\bibitem[OR16a]{OR14}
M.~Ortgiese and M.~I. Roberts.
\newblock Intermittency for branching random walk in {P}areto environment.
\newblock {\em To appear in Ann.\ Probab., arXiv:1405.5449}, 2016.

\bibitem[OR16b]{onepoint}
M.~Ortgiese and M.~I. Roberts.
\newblock One-point localisation for branching random walk in {P}areto
  environment.
\newblock {\em Preprint, arXiv:1602.09049}, 2016.

\bibitem[Res08]{Res08}
S.~I. Resnick.
\newblock {\em Extreme values, regular variation and point processes}.
\newblock Springer Series in Operations Research and Financial Engineering.
  Springer, New York, 2008.
\newblock Reprint of the 1987 original.

\end{thebibliography}

\end{document}